\title{Fabric idempotents and homological dimensions}
\author{Jordan McMahon}
\begin{document}

\newtheorem{lm}{Lemma}[section]
\newtheorem{prop}[lm]{Proposition}
\newtheorem{conj}[lm]{Conjecture}
\newtheorem{cor}[lm]{Corollary}
\newtheorem{theorem}[lm]{Theorem}

\theoremstyle{definition}
\newtheorem{eg}{Example}
\newtheorem{defn}{Definition}
\newtheorem{remark}{Remark}
\newtheorem{qu}{Question}

\begin{abstract}
Over a finite-dimensonal algbera $A$, simple $A$-modules that have projective dimension one have special properties. For example, Geigle-Lenzing studied them in connection to homological epimorphisms of rings, and they have also appeared in work concerning the finitistic dimension conjecture. If we however work in a $d$-cluster-tilting subcategory, then not all simples are contained in this subcategory. In this context, a replacement might be to work with idempotent ideals instead, and utilise the theory of Auslander-Platzeck-Todorov. We introduce the notion of a fabric idempotent as an analogue of the localising modules studied by Chen-Krause, and to illustrate the theory we show that they provide rich combinatorial properties. An application is to extend the classification of singularity categories of Nakayama algebras by Chen-Ye to higher Nakayama algebras.
\end{abstract}

\maketitle
\tableofcontents

\newcommand{\sub}{\underline}
\newcommand{\ra}{\rightarrow}
\newcommand{\mc}{\mathcal}
\newcommand{\blp}{^\prime}
\newcommand{\mcm}{\operatorname{\sub{MCM}(A)}}
\newcommand{\ba}{\begin{enumerate}[(a)]}

\section{Introduction}

For a given algebra, the study of its idempotents often contains useful information about the structure of the algebra itself. In the study of finite-dimensional algebras over a field, this is no different. The work of Auslander-Platzeck-Todorov \cite{apt} forms a starting point for connecting properties of idempotent ideals and homological dimensions. 
The benefit of studying idempotent ideals is that homological information is often easily transferred. \begin{defn}[Definition \ref{fabdef}]
	Let $A$ be a finite-dimensional algebra and $f$ an idempotent of $A$. Then $f$ is an \emph{fabric idempotent of $A$} if 
	\begin{itemize}
		\item The idempotent $f$ satisfies $\mathrm{proj.dim}_A(A/\langle f \rangle) \leq 1$, and there exists an idempotent $e$ such that:
		\item For every projective $A/\langle f \rangle$-module $P$, then $\tau_A(P)$ is injective as an \\$A/\langle e \rangle$-module.
\item For every injective $A/\langle e \rangle$-module $I$, then $\tau_A^{-1}(I)$ is projective as an \\$A/\langle f \rangle$-module.			\end{itemize}
\end{defn}
In the case that $A/\langle f\rangle=:S$ is simple as an $A$-module, then it is a localisable object: the module $S$ is simple and satisfies $\mathrm{proj.dim}_A(S)\leq 1$ and $\mathrm{Ext}^1_A(S,S)=0$. Localisable objects were introduced in \cite{ck}: the name stems from the localising subcategories studied by Geigle and Lenzing in \cite[Section 2]{gl}. Roughly speaking, localising modules allow for the conservation of information upon contraction of a arrow in a path algebra. Contractions of subquivers of type $A$ in a path algebra were considered in \cite{hkp}. The weaker condition of a simple module having projective dimension one was also of interest for the contracting of arrows as a reduction technique for finitistic dimension was studied in \cite{gps}. This was inspired by a reduction technique of Fuller-Saorin \cite{fs}, see the work of Xi \cite{xi} that deals with idempotents and finitistic dimension. 

An initial motivation for the introduction of localising modules was to study weighted projective lines as in \cite{ck2}, \cite{ck}, in the same vein as Geigle-Lenzing \cite{gl}. Another application was in the description of singularity categories of Nakayama algebras by Chen-Ye \cite{cy}.  For a finite-dimensional algebra with fabric idempotent $f$, suppose that an indecomposable $A$-module $P$ that is projective as an $A/\langle f\rangle$-module can be expressed in the form $P=\Omega^n_A(I)$ for some injective $A$-module $I$ and some postive integer $n$, and such that $n$ is minimal with this property. Then we say the \emph{fabric dimension} of $P$ is $\mathrm{fab.dim}_A(P)=n$, and say $\mathrm{fab.dim}_A(P)=\infty$ if $P$ cannot be expressed in this fashion. We further define $$\mathrm{fab.dim}_A(A/\langle f\rangle):=\mathrm{sup}\{\mathrm{fab.dim}(P)|P\in\mathrm{ind}(\mathrm{proj}(A/\langle f \rangle))\}.$$ Fabric idempotents allow us to use a variety of results from the literature to give a precise description of which projective modules generate each term in the projective resolution of a given Gorenstein injective module. 

\begin{prop}[Proposition \ref{fabdd}]
	Let $A$ be an $n$-Iwanaga-Gorenstein algebra with fabric idempotent $f$ satisfying $\mathrm{fab.dim}_A(A/\langle f \rangle)=n-1$. Then the following holds\begin{align*} 
	DA\in \mathrm{gen}_{m}(Ah) \implies M&\in \mathrm{gen}_{m}(Ah)  , &&\text{ for all $M\in \mathrm{GI}(A)$, $m<n$};\\
	\Omega^n(M)&\in  \mathrm{gen}_{\infty}(Af),&&\text{ for all $M\in \mathrm{mod}(A)$}.
	\end{align*}  
\end{prop}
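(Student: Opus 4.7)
The plan is to leverage the $n$-Iwanaga-Gorenstein hypothesis, which makes $\mathrm{GI}(A)$ a Frobenius exact category with projective-injective objects the genuine injective modules and forces every module to become Gorenstein projective after $n$ syzygies, together with the combinatorial rigidity imposed by the fabric dimension assumption, which pins down very precisely the top of the projective resolution of any injective module.

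\textbf{For the first statement}, fix $m<n$ and $M\in\mathrm{GI}(A)$. Using the Frobenius structure on $\mathrm{GI}(A)$ I would choose a short exact sequence $0\to M'\to I\to M\to 0$ with $I$ injective and $M'\in\mathrm{GI}(A)$. Since $I\in\mathrm{add}(DA)\subseteq\mathrm{gen}_m(Ah)$ by hypothesis, there is an explicit $m$-step $\mathrm{add}(Ah)$-presentation of $I$. Iterating produces a longer coresolution $0\to M^{(k)}\to I^{k-1}\to\cdots\to I^0\to M\to 0$ with each $I^j$ injective and $M^{(k)}\in\mathrm{GI}(A)$, and a horseshoe/splicing argument combines the $\mathrm{add}(Ah)$-presentations of the $I^j$'s into one of $M$. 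The strict inequality $m<n$ is what lets us take $k$ large enough that the unavoidable growth in presentation length from splicing stays within the $m$-step bound, essentially because the $n$-Iwanaga-Gorenstein condition caps the length of any interaction between injective cosyzygies and projectives.

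\textbf{For the second statement}, let $M\in\mathrm{mod}(A)$. The $n$-Iwanaga-Gorenstein hypothesis gives $N:=\Omega^n(M)\in\mathrm{GP}(A)$, and $\mathrm{GP}(A)$ is closed under taking syzygies, so it suffices to show that any such $N$ admits a projective cover in $\mathrm{add}(Af)$. By the fabric dimension assumption, every indecomposable projective $A/\langle f\rangle$-module $P$ is of the form $\Omega_A^{n-1}(J)$ for some injective $J$. Because $\mathrm{proj.dim}_A(A/\langle f\rangle)\le 1$ forces the ideal $\langle f\rangle$ to be a projective left $A$-module lying in $\mathrm{add}(Af)$, the next $A$-syzygy $\Omega_A(P)$ also lies in $\mathrm{add}(Af)$, and hence $\Omega_A^n(J)\in\mathrm{add}(Af)$ for every injective $J$. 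The remaining step is to reduce an arbitrary $N\in\mathrm{GP}(A)$ to this injective-generated case.

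\textbf{The main obstacle} I foresee is precisely that last reduction: showing that the projective cover of a general Gorenstein projective module, not just one of the form $\Omega^n(J)$ with $J$ injective, lies in $\mathrm{add}(Af)$. I expect this to rely on the second and third bullets of the fabric idempotent definition, namely the compatibilities $\tau_A(\mathrm{proj}(A/\langle f\rangle))\subseteq\mathrm{inj}(A/\langle e\rangle)$ and its dual, which are designed precisely to transport information between the Gorenstein projective and Gorenstein injective sides, so that the fabric dimension hypothesis — a statement naturally about injectives — can be used to control projectives. The splicing step in the first statement will need a parallel compatibility argument, and verifying these two parallel transports is where I expect the fabric idempotent axioms to carry the real weight of the proof.
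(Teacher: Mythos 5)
The paper's proof of Proposition~\ref{fabdd} is a one-line reduction to Lemma~\ref{mar}, Lemma~\ref{desc}, and their duals, and both of those lemmas hinge on a single tool that your proposal never invokes: Proposition~\ref{tooapt}, the Auslander--Platzeck--Todorov criterion, which says that $M\in\mathrm{gen}_l(Ae)$ if and only if $\mathrm{Ext}^i_A(M,F)=0$ for all injective $A/\langle e\rangle$-modules $F$ and $0\leq i\leq l$. This converts a statement about the \emph{shape} of a projective resolution into a finite collection of Ext-vanishing conditions, and those are then checked by elementary dimension shifting. Without this translation both halves of your argument run into real trouble.

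For the first part, your horseshoe/splicing plan is where the gap is sharpest. Given $0\to M'\to I\to M\to 0$ and $m$-step $\mathrm{add}(Ah)$-presentations of $I$ (and, inductively, of the successive cosyzygies), splicing does not produce an $m$-step presentation of $M$; resolutions lengthen, not shorten, under such constructions, and the appeal to ``$m<n$ caps the growth'' is not a mechanism. What actually makes the cosyzygy argument close is the Ext-vanishing reformulation: writing $M=\Omega^{-n}(N)$ via Proposition~\ref{omega}, the long exact sequences from $0\to \Omega^{-i}(N)\to I_i\to\Omega^{-(i+1)}(N)\to 0$ give $\mathrm{Ext}^i_A(M,F)$ as a subquotient of $\mathrm{Hom}_A(I_j,F)$-type terms, all of which vanish because $DA\in\mathrm{gen}_0(Ah)$. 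Your splicing intuition is essentially pointing at this, but without Proposition~\ref{tooapt} it cannot be made to work at the level of resolutions.

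For the second part there are two issues. First, you misread the fabric dimension hypothesis: $\mathrm{fab.dim}_A(A/\langle f\rangle)=n-1$ says that every indecomposable projective $A/\langle f\rangle$-module $P$ equals $\Omega^{m}_A(I)$ for \emph{some} injective $I$ and some $m\leq n-1$; it does not say that $\Omega^{n-1}_A(J)$ is a projective $A/\langle f\rangle$-module for every injective $J$, which is what your claim ``$\Omega^n_A(J)\in\mathrm{add}(Af)$ for every injective $J$'' requires. Second, you correctly identify that even granting this, passing from syzygies of injectives to arbitrary $\Omega^n(M)$ is a gap you do not close. The paper's Lemma~\ref{desc} sidesteps both problems: it applies Proposition~\ref{tooapt} in the other direction, computing $\mathrm{Ext}^i_A(P,\Omega^n(M))$ for indecomposable projective $A/\langle f\rangle$-modules $P$, substituting $P=\Omega^m(I)$ and dimension-shifting so that $n>m$ forces vanishing. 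The $\tau$-compatibility bullets of the fabric idempotent definition, which you expected to carry the proof, are not what is used here; they enter earlier, in Proposition~\ref{masod}, to pin down the shape of the minimal resolution of $A/\langle f\rangle$, whereas the proposition you are proving rests entirely on the APT criterion together with Proposition~\ref{omega}.
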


In other words, for an $n$-Iwanaga-Gorenstion algebra $A$ with fabric idempotent $f$, we may consider any Gorenstein injective $A$-module $M$. If we track the terms in the projective resolution of $M$, there will be an integer $n$, determined by the fabric dimension, such that the terms in the resolution switch to having a different projective generator after $n$ terms. Fabric idempotent ideals are quite natural, and we give multiple examples of algebras that admit a fabric idempotent ideal. In particular, we give examples of higher canonical algebras, introduced in \cite{himo}, minimally Auslander-Gorenstein algebras, introduced in \cite{is}, as well as higher Nakayama algebras, as introduced in \cite{jk}. By ``contracting" along fabric idempotents, generalising the methods of Chen-Ye \cite{cy} on Nakayama algebras, we have the following result.

\begin{theorem}[Theorem \ref{nakay}]
Let $A$ be a finite-dimensional higher Nakayama algebra. Then there exists a self-injective, higher Nakayama algebra $B$ such that 
$$D_{\mathrm{sg}}(A)\cong D_{\mathrm{sg}}(B)\cong \underline{\mathrm{mod}}(B).$$
\end{theorem}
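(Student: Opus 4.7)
The plan is to mimic the Chen--Ye strategy for ordinary Nakayama algebras, but with fabric idempotents playing the role of their localising simples, so that the argument works uniformly in the higher case. First I would recall (from the examples section referenced in the introduction) that every higher Nakayama algebra $A$ admits a natural fabric idempotent $f$, arising from the ``end" projectives of the higher cyclic/linear configuration. The candidate self-injective algebra will be obtained by iterated contraction along such fabric idempotents.

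Next I would show that passing from $A$ to the contracted algebra $A' := A/\langle f \rangle$, suitably interpreted via $eAe$ for a complementary idempotent $e$, preserves the singularity category. The point is that since $\mathrm{proj.dim}_A(A/\langle f\rangle) \leq 1$, the idempotent ideal $\langle f\rangle$ is a stratifying (or at least homologically well-behaved) ideal in the sense of Auslander--Platzeck--Todorov, and the vanishing and $\tau$-compatibility conditions in the definition of fabric idempotent ensure that the projective modules killed in the contraction do not contribute to the stable/singular part. Concretely, using Proposition \ref{fabdd}, any Gorenstein projective $A$-module has a syzygy lying in the subcategory generated by $Af$, so bounded complexes of projectives that disappear in $A'$ are perfect, hence zero in $D_{\mathrm{sg}}$. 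This should yield a triangle equivalence $D_{\mathrm{sg}}(A) \cong D_{\mathrm{sg}}(A')$, and I would check that $A'$ is again a higher Nakayama algebra (of strictly smaller rank), so the procedure terminates.

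I would then argue that the terminal algebra $B$ reached by this iteration is self-injective. The termination condition is precisely that $B$ no longer admits a nontrivial fabric idempotent whose contraction strictly reduces it; for higher Nakayama algebras this forces the Kupisch series to be constant, i.e., $B$ is a self-injective higher Nakayama algebra (the higher analogue of a self-injective Nakayama algebra is classified in \cite{jk}). The last equivalence $D_{\mathrm{sg}}(B) \cong \underline{\mathrm{mod}}(B)$ is then the classical Buchweitz/Rickard theorem, which applies since $B$ is Iwanaga--Gorenstein (in fact self-injective) so that all modules are Gorenstein projective and the singularity category coincides with the stable module category.

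The main obstacle I expect is the middle step: verifying that contraction along a fabric idempotent induces a triangle equivalence of singularity categories \emph{and} sends higher Nakayama algebras to higher Nakayama algebras. The first half requires carefully combining the $\tau$-compatibility in the fabric definition with a recollement-style argument (so that both the kernel and cokernel of $D^b(\mathrm{mod}\,A) \to D^b(\mathrm{mod}\,A')$ are controlled modulo perfect complexes), and the second half is a combinatorial check on the quiver-with-relations presentation of higher Nakayama algebras from \cite{jk}, showing that removing the vertices supporting $f$ collapses the higher Kupisch data to that of another such algebra. Once these are established, the induction on the ``fabric length" of $A$ closes the argument.
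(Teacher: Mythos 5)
Your high-level strategy agrees with the paper's --- iterated contraction along fabric idempotents, a singularity equivalence at each step, and a combinatorial check that one stays inside higher Nakayama algebras until reaching a constant Kupisch series --- but the middle step as written has a genuine gap and a conflation that would derail a careful write-up.

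First, the contraction preserving the singularity category is between $A$ and $fAf$, not between $A$ and $A/\langle f\rangle$. You write $A' := A/\langle f\rangle$ ``suitably interpreted via $eAe$,'' but the quotient algebra $A/\langle f\rangle$ and the corner algebra $fAf$ play entirely different roles here: the paper applies Theorem \ref{chen} (Chen, Psaroudakis--Skartsaeterhagen) to obtain $D_{\mathrm{sg}}(A)\cong D_{\mathrm{sg}}(fAf)$, and the hypotheses of that theorem are about the quotient $A/\langle f\rangle$. Keeping these two algebras distinct is essential, and the idempotent in the corner is $f$ itself, not some ``complementary'' $e$.

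Second, and more seriously, the hypothesis of Corollary \ref{chenii} is not merely that $f$ is a fabric idempotent: one also needs $\mathrm{gl.dim}(A/\langle f\rangle) < \infty$. The fabric condition gives $\mathrm{proj.dim}_A(A/\langle f\rangle)\leq 1$, which is far from sufficient. Your proposal substitutes an appeal to Proposition \ref{fabdd} about syzygies of Gorenstein modules, but that proposition describes the shape of projective resolutions; it does not yield the singularity equivalence, nor does it verify the global-dimension condition. The argument the paper actually makes is that the chosen $f_1$ satisfies the stronger structural fact that $A/\langle f_1\rangle$ is an $(n-1)$-Auslander algebra (and at later steps, a factor algebra of one), hence $\mathrm{gl.dim}(A/\langle f_1\rangle)=n-1<\infty$. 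Without this identification the Chen--Psaroudakis--Skartsaeterhagen machinery does not engage, and no amount of $\tau$-compatibility alone fills the gap. You should also note that the second condition of Theorem \ref{chen}, $\mathrm{proj.dim}_{fAf}(fA)<\infty$, needs verifying; Corollary \ref{chenii} does this by applying $\mathrm{Hom}_A(Af,-)$ to the defining short exact sequence.

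Finally, the termination analysis needs a second branch: the iterated Kupisch series $\underline{l}'$ may drop below $2$ in some coordinate, in which case the resulting algebra has finite global dimension and a \emph{trivial} singularity category rather than a self-injective one; the paper handles this explicitly, but your sketch assumes the iteration always lands on a constant Kupisch series.
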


 The class of Gorenstein projective modules for Nakayama algebras have also been studied in \cite{ringnak} and singularity categories of Nakayama algebras have been further studied in \cite{shen}.

\section{Background and notation}
Let $Q$ be a quiver and 
Let $A$ be a finite-dimensional algebra over a field $k$: we will assume that $A$ is of the form $kQ/I$, the path algebra $kQ$ over some quiver $Q$ and $I$ is an admissible ideal of $kQ$. For two arrows in $Q$ $\alpha:i\rightarrow j$ and $\beta:j\rightarrow k$, we denote their composition as $\beta\alpha:i\rightarrow k$. Let $A^\mathrm{op}$ denote the opposite algebra of $A$. An $A$-module will mean a finitely-generated left $A$-module; by $\mathrm{mod}(A)$ we denote the category of finite-dimensional left $A$-modules. The functor $D=\mathrm{Hom}_k(-,k)$ defines a duality between $\mathrm{mod}(A)$ and $\mathrm{mod}(A^{\mathrm{op}})$. Let $\nu_A=D\mathrm{Hom}(-,A)$ denote the Nakayama functor, $\tau_A$ the Auslander-Reiten translation and $\Omega_A$ the szyzgy functor. We omit the subscripts when the context is clear.  For an $A$-module $M$, let $\mathrm{add}(M)$ be the full subcategory of $\mathrm{mod}(A)$ composed of all $A$-modules isomorphic to direct summands of finite direct sums of copies of $M$. We further denote by $\mathrm{proj}(A)$ the class of projective $A$-modules, $\mathrm{inj}(A)$ the class of injective $A$-modules. For a class of modules $\mathcal{C}\subseteq \mathrm{mod}(A)$, we set $\mathrm{ind}(\mathcal{C})$ to be the indecomposable $A$-modules that are in $\mathcal{C}$. 

 An element $e$ of $A$ is an \emph{idempotent} if $e^2=e$. Two idempotents $e_1$ and $e_2$ are \emph{orthogonal} if $e_1e_2=e_2e_1=0$; an idempotent $e$ is \emph{primitive} if $e$ cannot be written as a sum $e=e_1+e_2$ for two non-zero, orthogonal idempotents  $e_1$ and $e_2$. For a finite-dimensional algebra $A$, the module $_AA$ admits a direct sum decomposition $$_AA=Ae_1\oplus \cdots \oplus Ae_n,$$ where $1=e_1+ \cdots + e_n$ and $e_1,\ldots,e_n$ are primitive, pairwise orthogonal, non-zero idempotents. The modules $Ae_i$ are the indecomposable projective $A$-modules.

Let $M=M_1\oplus \cdots  \oplus M_t$ be a basic $A$-module, in that the $M_i$ are pairwise non-isomorphic, indecomposable $A$-modules for $1\leq i\leq t$. Then for each $i$, the module  $\mathrm{Hom}_A(M,M_i)$ is an indecomposable, projective $\mathrm{End}_A(M)^{\mathrm{op}}$-module (and every indecomposable, projective $\mathrm{End}_A(M)^{\mathrm{op}}$-module is of this form). In this sense, each summand $M_i$ of $M$ corresponds to a primitive idempotent of the algebra $\mathrm{End}_A(M)^{\mathrm{op}}$. 

An algebra $A$ is called an \emph{$n$-Iwanaga-Gorenstein algebra} if it satisfies the following axioms:

\begin{enumerate} 
\item $\mathrm{inj.dim}_A(A)\leq n$,
\item $\mathrm{proj.dim}_A(DA)\leq n.$
\end{enumerate}

For example, any finite-dimensional algebra with finite global dimension $n$, such as the path algebra of a quiver without oriented cycles, is trivially $n$-Iwanaga-Gorenstein. An $n$-Iwanaga-Gorenstein algebra is trivially also $m$-Iwanaga-Gorenstein for all $m>n$; we denote by $\mathrm{Gor.dim}(A)$ the minimal positive integer $n$ such that $A$ is $n$-Iwanaga-Gorenstein. An $A$-module $M$ is said to be \emph{Gorenstein projective} (also referred to as \emph{maximal Cohen-Macaulay} in the literature, most notably in \cite{buch}) if $\mathrm{Ext}^i_A(M,A)=0$ for all $i>0$. The class of Gorenstein projective modules is denoted $\mathrm{GP}(A)$, and the class of Gorenstein projective modules that are not themselves projective is denoted by $\underline{\mathrm{GP}}(A)$. Likewise, define a module $M$ to be \emph{Gorenstein injective} if $\mathrm{Ext}^i_A(DA,M)=0$ for all $i>0$, and denote by $\mathrm{GI}(A)$ the class of Gorenstein injective modules and $\underline{\mathrm{GI}}(A)$ the class of Gorenstein injective modules that are not injective.

Let $D^b(A)$ denote the bounded derived category of $\mathrm{mod}(A)$. A complex of $A$-modules is said to be {\em perfect} if it is isomorphic in $D^b(A)$ to a finite complex of finitely generated projective $A$-modules. This gives a full subcategory of $D^b(A)$, denoted by $D_{\mathrm{perf}}^b(A)$. The \emph{singularity category} $D_\mathrm{sg}(A)$ is defined as the Verdier quotient of $D^b(A)/D^b_{\mathrm{perf}}(A)$ \cite{buch} \cite{orlov}. The following theorem is a classical result of Buchweitz.

\begin{theorem}\cite[Theorem 4.4.1]{buch}
Let $A$ be an $n$-Iwanaga-Gorenstein algebra. Then there is an equivalence of categories:
$$\underline{\mathrm{GP}}(A)\cong D_{\mathrm{sg}}(A).$$
\end{theorem}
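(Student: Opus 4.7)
The plan is to exhibit the functor $F: \underline{\mathrm{GP}}(A) \to D_{\mathrm{sg}}(A)$ given by sending a Gorenstein projective module $M$ to the complex concentrated in degree zero, and then show it is a triangle equivalence. First I would verify that $\mathrm{GP}(A)$, with the exact structure inherited from $\mathrm{mod}(A)$, is a Frobenius exact category whose projective-injective objects are exactly $\mathrm{proj}(A)$: for $M \in \mathrm{GP}(A)$ and $P$ projective, $\mathrm{Ext}^1(M, P) = 0$ comes directly from the definition, while the dual vanishing $\mathrm{Ext}^1(P, M) = 0$ is automatic. Enough projectives and injectives in this structure follow because every $M \in \mathrm{GP}(A)$ admits a projective cover and, by the definition of Gorenstein projectivity together with the Iwanaga-Gorenstein hypothesis, embeds into a projective with cokernel again in $\mathrm{GP}(A)$. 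This standard machinery makes $\underline{\mathrm{GP}}(A)$ a triangulated category with suspension $\Omega^{-1}$, and $F$ is canonically a triangle functor since short exact sequences in $\mathrm{GP}(A)$ go to distinguished triangles in $D^b(A)$.

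For full faithfulness, I would show that for $M, N \in \mathrm{GP}(A)$ the natural map
\[
\underline{\mathrm{Hom}}_A(M,N) \longrightarrow \mathrm{Hom}_{D_{\mathrm{sg}}(A)}(M, N)
\]
is a bijection. A morphism in the Verdier quotient is a roof $M \leftarrow X \to N$ whose left leg has perfect cone. Because $M \in \mathrm{GP}(A)$, the groups $\mathrm{Ext}^i_A(M, P) = 0$ for $i > 0$ and all projective $P$, hence for every perfect complex $C$ the Hom-space $\mathrm{Hom}_{D^b(A)}(M, C[i])$ vanishes for $i \gg 0$; this lets one replace any roof by an honest morphism $M \to N$ of modules, giving surjectivity. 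Injectivity amounts to showing a module map $f: M \to N$ which is zero in $D_{\mathrm{sg}}(A)$ must factor through a projective, and this follows from the same Ext-vanishing by analysing the kernel of localisation.

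The main obstacle, and the step that really uses the Iwanaga-Gorenstein hypothesis, is essential surjectivity. Given $X \in D^b(A)$, I would first replace $X$ by a bounded-above complex of projectives, truncate brutally far to the right, and exhibit $X \cong Y[k]$ in $D_{\mathrm{sg}}(A)$ where $Y$ is a single module and $k \in \mathbb{Z}$; this is standard, since bounded complexes of projectives are zero in $D_{\mathrm{sg}}(A)$. It then suffices to show every $A$-module $Y$ is isomorphic in $D_{\mathrm{sg}}(A)$ to an object of $\mathrm{GP}(A)$. For this, take the syzygy sequence
\[
0 \to \Omega^n(Y) \to P^{n-1} \to \cdots \to P^0 \to Y \to 0,
\]
which yields $Y \cong \Omega^n(Y)[n]$ in $D_{\mathrm{sg}}(A)$. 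The point is to prove $\Omega^n(Y) \in \mathrm{GP}(A)$, i.e. that $\mathrm{Ext}^i_A(\Omega^n(Y), A) = 0$ for all $i > 0$. By dimension shifting this reduces to $\mathrm{Ext}^{i+n}_A(Y, A) = 0$, which follows from $\mathrm{inj.dim}_A(A) \leq n$. The symmetric use of $\mathrm{proj.dim}_A(DA) \leq n$ is what provides enough cosyzygies in $\mathrm{GP}(A)$ to make the Frobenius argument of the first paragraph go through, and it is the joint use of both Iwanaga-Gorenstein conditions that is the crux of the proof.
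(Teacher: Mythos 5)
The paper does not prove this theorem; it is cited as \cite[Theorem 4.4.1]{buch} and used as a black box, so there is no in-text proof to compare against. Your outline reproduces the standard architecture of Buchweitz's argument: establish that $\mathrm{GP}(A)$ is a Frobenius exact category whose projective-injective objects are exactly $\mathrm{proj}(A)$, so that $\underline{\mathrm{GP}}(A)$ is triangulated with suspension $\Omega^{-1}$; map modules to stalk complexes; and obtain essential surjectivity from the fact that $\Omega^n(Y)$ is Gorenstein projective for all $Y$ via $\mathrm{inj.dim}(A)\leq n$, which is precisely Proposition~\ref{omega}. The placement of the two Iwanaga-Gorenstein inequalities — one for syzygies landing in $\mathrm{GP}(A)$, one for the Frobenius structure to have enough injectives — is correctly identified.

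The genuinely underspecified step is full faithfulness. The vanishing you invoke, that $\mathrm{Hom}_{D^b(A)}(M,C[i])=0$ for $i\gg 0$ when $C$ is perfect, is true, but it does not by itself untwist a roof $M \leftarrow X \to N$ whose left leg has perfect cone $C$ concentrated near cohomological degree zero, since $\mathrm{Hom}_{D^b(A)}(M,C)$ is generally nonzero there. The missing mechanism is to trade $M$ for a cosyzygy $\Omega^{-k}(M)[-k]$ — available precisely because the Frobenius structure supplies enough injectives — so that, after this replacement, the cone of the left leg can be taken to be a bounded complex of projectives in strictly positive degrees, against which $M$ is orthogonal by $\mathrm{Ext}^j_A(M,P)=0$ for $j>0$; equivalently, one identifies $\mathrm{Hom}_{D_{\mathrm{sg}}(A)}(M,N)$ with $\varinjlim_k \underline{\mathrm{Hom}}_A(\Omega^k M,\Omega^k N)$ and uses that $\Omega$ is an autoequivalence of $\underline{\mathrm{GP}}(A)$ to see the transition maps are all isomorphisms. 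The same issue recurs quietly at the end of your essential surjectivity step, where you land on $X\cong \Omega^n(Y)[n+k]$ in $D_{\mathrm{sg}}(A)$ and still need cosyzygies to absorb the residual shift $n+k$ into an honest object of $\mathrm{GP}(A)$. These are not wrong turns — the ideas are all present in your sketch and you point at the right hypotheses — but the roof-straightening is exactly where the real work of the proof lives and it is elided here.
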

To determine the Gorenstein projective modules over $A$, the following result is useful. 

\begin{prop}\label{omega}\cite[Theorem 2.3.3]{chen}
For a finite-dimensional algebra $A$, the following are equivalent:
\begin{itemize}
\item The algebra $A$ is $n$-Iwanaga-Gorenstein.
\item There is an equality $\mathrm{GP}(A)=\Omega^n(\mathrm{mod}(A))$,
\end{itemize}
where $\Omega^n(\mathrm{mod}(A))=\{\Omega^n(M)|M\in\mathrm{mod}(A)\}$, and includes the projective $A$-modules. 
\end{prop}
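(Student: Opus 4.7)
My plan is to prove the two implications separately, in both cases leveraging the dimension-shift isomorphism $\mathrm{Ext}^i_A(\Omega^n M, A) \cong \mathrm{Ext}^{i+n}_A(M, A)$ coming from any projective resolution of $M$.

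For the forward direction, suppose $A$ is $n$-Iwanaga-Gorenstein. The inclusion $\Omega^n(\mathrm{mod}(A)) \subseteq \mathrm{GP}(A)$ is immediate from the dimension shift: for any $M$ and $i > 0$, the first axiom $\mathrm{inj.dim}_A(A) \leq n$ kills $\mathrm{Ext}^{i+n}_A(M, A)$, so $\Omega^n M$ is Gorenstein projective. For the reverse inclusion, take $M \in \mathrm{GP}(A)$. The vanishing of $\mathrm{Ext}^{>0}(M, A)$ makes $\mathrm{Hom}_A(-, A)$ exact on a projective resolution of $M$, so that $M^* := \mathrm{Hom}_A(M, A)$ is a Gorenstein projective right $A$-module. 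The second axiom $\mathrm{proj.dim}_A(DA) \leq n$, equivalent to $\mathrm{inj.dim}_{A^{\mathrm{op}}}(A) \leq n$, bounds the Ext of $M^*$ against $A$ in the same way; dualizing a projective resolution of $M^*$ back via $\mathrm{Hom}_{A^{\mathrm{op}}}(-, A)$ produces a finite exact coresolution of $M$ by projective left $A$-modules. Splicing with the projective resolution of $M$ yields a complete projective resolution, and reading off the $n$-th positive syzygy realizes $M$ as $\Omega^n$ of a module, giving $M \in \Omega^n(\mathrm{mod}(A))$.

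For the reverse implication, assume $\mathrm{GP}(A) = \Omega^n(\mathrm{mod}(A))$. The first axiom falls out immediately: for any $M$, the module $\Omega^n M$ is Gorenstein projective, hence $\mathrm{Ext}^{n+1}_A(M, A) \cong \mathrm{Ext}^1_A(\Omega^n M, A) = 0$ and $\mathrm{inj.dim}_A(A) \leq n$. Extracting the second axiom $\mathrm{proj.dim}_A(DA) \leq n$ is where I expect the main obstacle to lie, since the hypothesis is phrased only in terms of left syzygies. My intended approach is to transport the hypothesis across the $k$-duality $D$: since $D$ exchanges $\Omega$ with $\Omega^{-1}$ and sends $\mathrm{GP}(A)$ to $\mathrm{GI}(A^{\mathrm{op}})$, the hypothesis converts into $\mathrm{GI}(A^{\mathrm{op}}) = \Omega^{-n}(\mathrm{mod}(A^{\mathrm{op}}))$; a symmetric Ext-vanishing argument on the injective side then yields $\mathrm{inj.dim}_{A^{\mathrm{op}}}(A) \leq n$, which is exactly $\mathrm{proj.dim}_A(DA) \leq n$. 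An alternative route is to show directly that $\Omega^n(DA)$ is projective: it is Gorenstein projective by hypothesis, and a standard minimality argument on the dual of its minimal projective resolution shows that any Gorenstein projective module of finite projective dimension is projective, so it suffices to prove that $DA$ has finite projective dimension — which can be extracted from the bound $\mathrm{inj.dim}_A(A) \leq n$ applied on the opposite side. The subtle step in either approach is verifying that the symmetry is preserved as an equality rather than a one-sided inclusion, which is where I would expect to spend the bulk of the technical work.
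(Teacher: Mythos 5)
The paper gives no proof of this proposition --- it is quoted directly from \cite[Theorem 2.3.3]{chen}, so there is no argument in the text to compare against; I can only assess your attempt on its own.

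The forward implication is in the right spirit but is stated loosely. Exactness of the dualised complex $0 \to M^* \to P_0^* \to P_1^* \to \cdots$ by itself only shows that $M^*$ is an $i$-th syzygy over $A^{\mathrm{op}}$ for every $i\geq 0$; it does \emph{not} immediately make $M^*$ Gorenstein projective. One then needs $\mathrm{inj.dim}_{A^{\mathrm{op}}}(A)\leq n$ to deduce $\mathrm{Ext}^{>0}_{A^{\mathrm{op}}}(M^*,A)=0$. You do invoke the second axiom in the next clause, so the idea is present, but the ordering of your sentences presents ``$M^*$ is Gorenstein projective'' as following from exactness alone. Two smaller slips: the coresolution you obtain is infinite, not ``finite,'' and you silently identify $M$ with $M^{**}$, which is a separate step.

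The genuine gap is in the reverse implication, at the second axiom, and it is not the subtlety you anticipated. Trace the indices in your duality route: the dualised hypothesis says $\Omega^{-n}N\in\mathrm{GI}(A^{\mathrm{op}})$ for every right module $N$, so $\mathrm{Ext}^{n+1}_{A^{\mathrm{op}}}(DA,N)=0$, so $\mathrm{proj.dim}_{A^{\mathrm{op}}}(DA)\leq n$. But the module $DA$ appearing as the injective cogenerator on the right side is $D({}_AA)$, and $\mathrm{proj.dim}_{A^{\mathrm{op}}}\bigl(D({}_AA)\bigr)=\mathrm{inj.dim}_A({}_AA)$. That is the \emph{first} axiom again, not $\mathrm{proj.dim}_A\bigl(D(A_A)\bigr)=\mathrm{inj.dim}_{A^{\mathrm{op}}}(A_A)$, which is what the second axiom requires. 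Applying $D$ does not transport information from the left-hand injective dimension of $A$ to the right-hand one; it merely restates the same bound in an Ext-vanishing on the other side. Your alternative route is circular in the same way: you reduce to showing $\mathrm{proj.dim}_A(DA)<\infty$ by ``applying $\mathrm{inj.dim}_A(A)\leq n$ on the opposite side,'' but the hypothesis $\mathrm{GP}(A)=\Omega^n(\mathrm{mod}(A))$ is a one-sided statement, and you have no analogue of it over $A^{\mathrm{op}}$ to appeal to. The left-to-right transfer is exactly the hard content here --- this is Gorenstein-symmetry territory --- and it must be extracted from the inclusion $\mathrm{GP}(A)\subseteq\Omega^n(\mathrm{mod}(A))$, which your reverse argument never actually uses.
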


\section{Fabric idempotent ideals}

Our reference point for the study of idempotent ideals will be the work of Auslander-Platzeck-Todorov \cite{apt}. An \emph{idempotent ideal of $A$} is a two-sided ideal $I$ of $A$ such that $I^2=I$. In our setting, every idempotent ideal will be of the form $AeA$ for some idempotent $e$ of $A$, and we write $\langle e \rangle:=AeA$ for this ideal.

\begin{defn}
For an integer $l\geq 0$ and idempotent $e$ of $A$, define the class of modules $\mathrm{gen}_{l}(Ae)$ by $M\in \mathrm{gen}_{l}(Ae)$ if and only if $M$ has a projective resolution $$\cdots \ra P_1\ra P_0\ra M \ra 0$$ such that $P_i\in \mathrm{add}(Ae)$ for all $0\leq i\leq l$. If for every $i\geq 0$, $P_i$ is contained in $\mathrm{add}(Ae)$, we say $M\in \mathrm{gen}_{\infty}(Ae)$.
Dually, define the class of modules $\mathrm{cogen}_{l}(eA)$ by $M\in \mathrm{cogen}_{l}(eA)$ if and only if $M$ has a injective resolution $$0 \ra M\ra I_0\ra I_1 \ra \cdots$$ such that $I_i\in \mathrm{add}(eA)$ for all $0\leq i\leq l$. The class $\mathrm{cogen}_{\infty}(eA)$ may be defined similarly. 
\end{defn}

Here we have used the notation of \cite{ps}. The notation of Auslander-Platzeck-Todorov \cite[Defintion 2.3]{apt} assumes a fixed projective module $P$. If for example $P=Ae$ for some idempotent $e$ then the class $\mathrm{gen}_l(Ae)$ is denoted in \cite{apt} by $\mathbf{P}_l$.

\begin{prop}\cite[Proposition 2.4]{apt}\label{tooapt}
For an arbitrary $A$-module $M$, then $$M\in\mathrm{cogen}_{l}(eA) \iff \mathrm{Ext}^i_A(A/\langle e \rangle,M)=0$$ for all $0\leq i\leq l$.
Similarly, $$M\in\mathrm{gen}_{l}(Ae) \iff \mathrm{Ext}^i_A(M,F)=0$$ for all injective $A/\langle e \rangle$-modules $F$ and $0\leq i\leq l$.
\end{prop}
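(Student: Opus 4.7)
The plan is to prove both equivalences by a parallel two-step argument: first a Hom-vanishing lemma that makes the forward direction immediate, and second an inductive dimension-shift for the converse.

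Writing $e=\sum_{j\in J}e_j$ as a sum of primitive idempotents, the first step for the cogen statement is to show $\mathrm{Hom}_A(A/\langle e\rangle,I)=0$ for any injective $I\in\mathrm{add}(eA)$. The point is that $A/\langle e\rangle$ has all its composition factors among the simples $S_k$ with $k\notin J$, whereas every nonzero submodule of a module in $\mathrm{add}(eA)$ contains a simple $S_j$ with $j\in J$ in its socle; a nonzero map from $A/\langle e\rangle$ would force its image to contain both kinds of simples. Dually, for $P\in\mathrm{add}(Ae)$ and $F$ an injective $A/\langle e\rangle$-module, $\mathrm{Hom}_A(P,F)\cong eF=0$ since $\langle e\rangle F=0$.

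The forward direction of each equivalence is then immediate by computing Ext through the postulated resolution: the first $l+1$ terms of the relevant $\mathrm{Hom}$ complex vanish, hence so does $\mathrm{Ext}^i$ for $0\le i\le l$. For the converse of the cogen statement I would induct on $l$. Assuming $\mathrm{Hom}_A(A/\langle e\rangle,M)=0$, no simple $S_k$ with $k\notin J$ embeds in $M$, so $\mathrm{soc}(M)$ is supported at $e$ and the injective envelope $E(M)$ lies in $\mathrm{add}(eA)$. Setting $M'=E(M)/M$ and applying $\mathrm{Hom}_A(A/\langle e\rangle,-)$ to the short exact sequence $0\to M\to E(M)\to M'\to 0$, the key lemma together with the injectivity of $E(M)$ yields the dimension shift $\mathrm{Ext}^i_A(A/\langle e\rangle,M')\cong\mathrm{Ext}^{i+1}_A(A/\langle e\rangle,M)$ for $i\ge 0$, so $M'$ satisfies the Ext-vanishing hypothesis up to degree $l-1$; induction produces an injective resolution of $M'$ whose first $l$ terms lie in $\mathrm{add}(eA)$, and prepending $E(M)$ gives the required resolution for $M$.

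The converse of the $\mathrm{gen}$ statement is entirely dual, using projective covers and syzygies. The Ext-vanishing against the $A/\langle e\rangle$-injective envelope of $S_k$ rules out $S_k$ with $k\notin J$ as a quotient of $M$, so the projective cover $P_0$ lies in $\mathrm{add}(Ae)$, and the syzygy sequence $0\to\Omega M\to P_0\to M\to 0$ supplies the analogous dimension shift via the dual key lemma. The one wrinkle worth flagging is that in the second equivalence $F$ is injective only as an $A/\langle e\rangle$-module, so the Ext groups must be computed over $A$; this is no obstacle, since the argument only uses $\mathrm{Hom}_A(P_i,F)$-vanishing together with the projectivity of the $P_i$ in $\mathrm{mod}(A)$.
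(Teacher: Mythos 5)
The paper does not give its own proof of this statement; it cites it directly as \cite[Proposition 2.4]{apt}. That said, your proof is correct and is, as far as one can tell, the standard dimension-shifting argument that Auslander--Platzeck--Todorov use. Both key lemmas hold: $\mathrm{Hom}_A(A/\langle e\rangle,I)=0$ for $I\in\mathrm{add}(D(eA))$ because the image would be simultaneously a quotient of $A/\langle e\rangle$ (so all its composition factors are $S_k$ with $k\notin J$) and a nonzero submodule of $I$ (so its socle contains some $S_j$, $j\in J$); and $\mathrm{Hom}_A(Ae,F)\cong eF=0$ since $AeA\cdot F=0$. The forward directions follow by computing $\mathrm{Ext}$ from the given resolution, and the inductive converses correctly identify the base case (socle, respectively top, supported only at $J$, so the injective envelope, respectively projective cover, lies in the required $\mathrm{add}$) and carry out the dimension shift using the vanishing of both $\mathrm{Hom}$ and higher $\mathrm{Ext}$ against the injective envelope, respectively projective cover. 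Your closing remark that $F$ need only be injective as an $A/\langle e\rangle$-module is exactly the right thing to flag: the argument never needs injectivity of $F$ over $A$, only the $\mathrm{Hom}_A(P_i,F)=0$ vanishing together with projectivity of the $P_i$ in $\mathrm{mod}(A)$. The one cosmetic point worth cleaning up is that $\mathrm{add}(eA)$ in the paper's definition of $\mathrm{cogen}_l$ should be read as $\mathrm{add}(D(eA))$ (left injectives), which you implicitly and correctly do.
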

From this result we may obtain a great deal of information, especially in the case of fabric idempotent ideals. In order to understand the existence of fabric idempotents, we may restrict our attention to algebras satisfying the following conditions on their quivers. 

\begin{prop}\label{masod}
Let $A=kQ/I$ be a finite-dimensional algebra and let $F$ be a subset of the set of vertices of $Q$ and $f=\sum_{i\in F}e_i$. Suppose that $$\mathrm{proj.dim}_A(A/\langle f \rangle)\leq 1.$$ Then there exists a unique idempotent $e$ of $A$ such that $f$ becomes a fabric idempotent if and only if the following conditions hold:
\begin{enumerate}
\item For each $i\notin F$ there is at most one arrow $\alpha_i:i\rightarrow t(\alpha_i)$ starting at $i$ and such that $t(\alpha_i)\in F$. If such an arrow exists, define $t(\alpha_i)=:i^\prime$. \label{ef1}
\item For any $j\notin F$ and arrow $\delta:s(\delta)\rightarrow j^\prime$, then either $s(\delta)=j$ or $s(\delta)=i^\prime $ for some $i\notin F$.   \label{ef4}
\item Let $ j\notin F$ such that an arrow $\alpha_j$ exists. Then for any $i\notin F$ and any arrow $\beta:i\rightarrow j$, then either $\alpha_j\beta\in I$ or there exist arrows $\alpha_i$ and $\delta:i^\prime \rightarrow j^\prime$ such that $\delta\alpha_i\cong \alpha_j\beta$.\label{ef2}
\item For any $i,j\notin F$ and arrow $\delta:i^\prime \rightarrow j^\prime$, then either $\delta\alpha_i\in I$ or there exists an arrow $\beta:i\rightarrow j$ such that $\delta\alpha_i \cong \alpha_j\beta$. \label{ef3}
\end{enumerate}

\end{prop}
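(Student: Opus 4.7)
The plan is to directly compute the Auslander-Reiten translate $\tau_A P_i$ of each indecomposable projective $A/\langle f\rangle$-module $P_i := (A/\langle f\rangle)e_i$ ($i \notin F$), read off the forced idempotent $e$ from the resulting socle data, and verify via a path-by-path argument that conditions (1)--(4) are exactly equivalent to the isomorphisms $\tau_A P_i \cong I_{A/\langle e\rangle}(i')$ together with the dual condition for $\tau_A^{-1}$.

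Assume first that conditions (1)--(4) hold. Using condition (1) and an iterative application of condition (3), every element of $\langle f\rangle e_i$ is a left $A$-multiple of $\alpha_i$ (if $\alpha_i$ exists; otherwise $\langle f\rangle e_i = 0$ and $P_i = Ae_i$). Combined with the hypothesis $\mathrm{proj.dim}_A(A/\langle f\rangle)\leq 1$, this gives the minimal $A$-projective resolution
\[
0 \longrightarrow Ae_{i'} \xrightarrow{\alpha_i} Ae_i \longrightarrow P_i \longrightarrow 0
\]
whenever $\alpha_i$ exists. Applying $\mathrm{Hom}_A(-,A)$ and then $D$ yields $\tau_A P_i \cong \ker(\nu(\alpha_i)\colon I(i')\to I(i)) \cong D(e_{i'}A/\alpha_iA)$, a left $A$-module with socle $S_{i'}$. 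Since indecomposable injective $A/\langle e\rangle$-modules are indexed by vertices outside the support of $e$, one must take $e := \sum_{j\in E}e_j$ where $E$ consists of the vertices that are not of the form $i'$ for any $i\notin F$ with $\alpha_i$ existing; this choice is forced, giving the uniqueness of $e$.

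The core identification $\tau_A P_i \cong I_{A/\langle e\rangle}(i')$ reduces to the equality of right ideals $\alpha_iA = e_{i'}\langle e\rangle$ inside $e_{i'}A$. The inclusion $\alpha_iA\subseteq e_{i'}\langle e\rangle$ is immediate, since the source $i$ of $\alpha_i$ lies in $E$ by construction. For the reverse inclusion, one inducts on paths terminating at $i'$ that factor through a vertex of $E$: condition (2) restricts which arrows can enter $i'$ (and, more generally, enter any $k'$), while condition (4) rewrites each step $\gamma\alpha_k$ arising along the way as $\alpha_i\beta$ (or shows it to be zero in $A$), ultimately factoring the path through $\alpha_i$. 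The dual fabric axiom concerning $\tau_A^{-1}$ of injective $A/\langle e\rangle$-modules is handled by the symmetric argument via minimal injective copresentations, this time invoking condition (3).

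For the converse, a failure of any one of (1)--(4) produces a path in the symmetric difference of $\alpha_iA$ and $e_{i'}\langle e\rangle$, obstructing the required isomorphism. The main obstacle is the path induction in step three: one must use the admissibility of the defining ideal to terminate the induction, and systematically apply the ``commutative-square-or-zero'' dichotomy of conditions (3)--(4) to propagate across successive arrows, with careful bookkeeping when a path passes through several intermediate vertices of $F$ and $E$.
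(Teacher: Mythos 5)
Your proposal follows essentially the same route as the paper's proof: both build the minimal projective presentation $0 \to Ae_{i'} \to Ae_i \to Ae_i/\langle f\rangle \to 0$ from conditions~(1) and~(3) together with $\mathrm{proj.dim}_A(A/\langle f\rangle)\leq 1$, compute $\tau$ via the transpose, read off the forced choice of $E$ from the resulting socle, and use conditions~(2) and~(4) for the dual identification. Your formulation of that last step as the right-ideal identity $\alpha_i A = e_{i'}\langle e\rangle$ is arguably a cleaner way to state what the paper encodes in its sequence~\eqref{fe1}, but the content is the same.

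One small caveat: your sketch of the converse direction, ``a failure of any one of (1)--(4) produces a path in the symmetric difference of $\alpha_iA$ and $e_{i'}\langle e\rangle$,'' does not accurately describe the obstruction for every condition. A failure of condition~(1), for instance, means there is no well-defined single vertex $i'$ at all, so the statement about that ideal identity does not even parse; the actual obstruction there is that $\Omega P_i$ no longer has projective cover $Ae_{i'}$, so $\tau P_i$ has non-simple socle and cannot be indecomposable injective over any $A/\langle e\rangle$. The paper handles the converse by instead starting from the assumed isomorphism $\tau_A(A/\langle f\rangle) = DA/\langle e\rangle$, reading off from the projective cover and injective envelope that the unique connecting arrow $\alpha_i$ exists (giving (1) and (2)), and then deducing (3) and (4) from exactness of the two resulting sequences; you should rework your converse to argue along those lines rather than funnelling every failure through the ideal identity.
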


These conditions should be seen as a generalisation of the conditions described in \cite[Example 3.6.2]{ck}. In particular, if $A$ is hereditary and both $A/\langle e\rangle$ and $A/\langle f\rangle$ are simple as $A$-modules, then they determine a non-split expansion of abelian categories.

\begin{proof}
Set $E$ to be the subset of vertices that do not arise as $i^\prime$ for some $i\notin F$, and  let $e=\sum_{i\in E}e_i$.
If $i\notin F$ and there is no such arrow $\alpha_i$, then condition (\ref{ef2}) implies that $Ae_i/\langle f \rangle $ is projective as a $A$-module. In this case $\tau_A(Ae_i/\langle f\rangle)=0$. Otherwise, condition (\ref{ef1}) implies that for $i\notin F$ there is a sequence \begin{align*}
 A e_{i^\prime}\rightarrow A e_{i}\rightarrow A e_i/\langle f \rangle\rightarrow 0.
\end{align*}
This sequence is exact if and only if it is true for each $j\in F$ that any path $p: i\rightarrow \cdots \rightarrow j$ factors through $\alpha_i$. Hence condition (\ref{ef2}) implies exactness.
Because we assume that $\mathrm{proj.dim}_A(A/\langle f \rangle)\leq 1$, then there must be a short exact sequence 

\begin{align}\label{fe2}
0\rightarrow A e_{i^\prime}\rightarrow A e_{i}\rightarrow A e_i/\langle f \rangle\rightarrow 0.
\end{align}  Dually, conditions (\ref{ef4}) and (\ref{ef3}) imply that there is an exact sequence\begin{align}\label{fe1}
 0\rightarrow e_{i^\prime}A/\langle e \rangle\rightarrow e_{i^\prime}A\rightarrow e_{i}A.
\end{align}
Therefore $\tau_A(A/\langle f \rangle )=DA/\langle e\rangle$.
Conversely, let $\tau_A(A/\langle f \rangle )=DA/\langle e\rangle$. The Auslander-Reiten translation $\tau$ sends indecomposable non-projective modules to indecomposable non-injective modules. So let $Ae_i/\langle f \rangle$ be an indecomposable, non-projective $A$-module; it follows that there is a vertex $i^\prime\in Q$ such that the module $\tau(Ae_i/\langle f \rangle)=e_{i^\prime}A/\langle e \rangle$ is an indecomposable, non-injective $A$-module. By virtue of the fact that the projective cover of $Ae_i/\langle f \rangle$ is $Ae_i$ and the injective envelope of $e_{i^\prime}A/\langle e \rangle$ is $e_{i^\prime}A$, we must have exact sequences:
\begin{align}\label{fe3}
&0\rightarrow A e_{i^\prime}\rightarrow A e_{i}\rightarrow A e_i/\langle f \rangle\rightarrow 0.\\
& 0\rightarrow e_{i^\prime}A/\langle e \rangle\rightarrow e_{i^\prime}A\rightarrow e_{i}A.\label{fe4}
\end{align}
So there must be an arrow $\alpha_i:i\rightarrow i^\prime$, and this is the unique arrow starting at $i$ and ending at a vertex in $F$, likewise the unique arrow ending at $i^\prime$ and starting at a vertex in $E$.  This implies that conditions (\ref{ef1}) and (\ref{ef4}) hold. The exactness of the equations \eqref{fe3} and \eqref{fe4} now implies that the remaining conditions hold.

If $Ae_i/\langle f\rangle$ is an indecomposable, projective $A$-module, then there may be no arrow as in condition (\ref{ef1}), and the remaining conditions hold trivially.
\end{proof}

An implication of Proposition \ref{masod} is that the definition of a fabric idempotent is equivalent to the following:

\begin{defn}\label{fabdef}
Let $A=kQ/I$ be a finite-dimensional algebra and let $F$ be a subset of the set of vertices of $Q$ and $f=\sum_{i\in F}e_i$. Then $f$ is an \emph{fabric idempotent of $A$} if 
\begin{itemize}
	\item The idempotent $f$ satisfies $\mathrm{proj.dim}_A(A/\langle f \rangle) \leq 1$.
	\item Conditions (\ref{ef1})-(\ref{ef3}) of Proposition \ref{masod} are satisfied. 
\end{itemize}
Dually, $f$ is a \emph{cofabric idempotent of $A$} if 
\begin{itemize}
	\item The idempotent $f$ satisfies $\mathrm{inj.dim}_A(I) \leq 1$ for any injective $A/\langle f\rangle$-module $I$, and there exists an idempotent $e$ such that:
	\item For any injective $A/\langle f\rangle$-module $I$, then $\tau^{-1}_A(I)$ is projective as an\\ $A\langle e \rangle$-module.
	\item For any projective $A/\langle f\rangle$-module $P$, then $\tau_A(P)$ is injective as an \\$A\langle e \rangle$-module.
\end{itemize} The \emph{cofabric dimension} can be defined in a dual fashion to the fabric dimension: $$\mathrm{cofab.dim}_A(A/\langle f\rangle):=\mathrm{sup}\{\mathrm{cofab.dim}(I)|I\in\mathrm{ind}(\mathrm{inj}(A/\langle f \rangle))\},$$ where $\mathrm{cofab.dim}(I)$ is the minimal positive integer $n$ (if one exists) such that $I=\Omega^{-n}_A(P)$ for some projective $A$-module $P$ and infinite if no such projective module exists.
\end{defn}
The following result  restricts to Lemma 3.9 of \cite{mz} if $Ae$ is an additive generator of the  projective-injective $A$-modules. 
\begin{lm}\label{mar} Let $A$ be an $n$-Iwanaga-Gorenstein algebra and $DA\in\mathrm{gen}_{n-1}(Ae)$. Then $Q\in\mathrm{gen}_{n-1}(Ae)$ for all $Q\in \overline{\mathrm{GI}}(A)$. 
\end{lm}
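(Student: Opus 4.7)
The plan is to reduce, via Proposition \ref{tooapt}, to showing
$$\mathrm{Ext}^i_A(Q,F)=0 \quad \text{for } 0 \leq i \leq n-1 \text{ and every injective } A/\langle e\rangle\text{-module } F,$$
and then to produce these vanishings by propagating the hypothesis on $DA$ along an injective coresolution of $Q$ of length $n$. The dual of Proposition \ref{omega}---applied to $A^{\mathrm{op}}$ and transported through the duality $D$---yields the identification $\mathrm{GI}(A)=\Omega^{-n}(\mathrm{mod}(A))$, so that $Q = \Omega^{-n}(N)$ for some $N\in \mathrm{mod}(A)$ and we obtain an exact sequence
$$0 \to N \to J^0 \to J^1 \to \cdots \to J^{n-1} \to Q \to 0,$$
in which each $J^k$ is injective and hence lies in $\mathrm{add}(DA)$.

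I would then break this long sequence into short exact sequences $0 \to K^k \to J^k \to K^{k+1} \to 0$, with $K^0=N$ and $K^n=Q$. The hypothesis $DA \in \mathrm{gen}_{n-1}(Ae)$, unpacked through Proposition \ref{tooapt}, reads $\mathrm{Ext}^j_A(J^k,F) = 0$ for every $0 \leq j \leq n-1$ and every injective $A/\langle e\rangle$-module $F$. In particular $\mathrm{Hom}_A(J^k,F) = 0$, so applying $\mathrm{Hom}_A(-,F)$ to each short exact sequence gives the embedding $\mathrm{Hom}_A(K^{k+1},F) \hookrightarrow \mathrm{Hom}_A(J^k,F) = 0$; thus $\mathrm{Hom}_A(K^j,F)=0$ for every $1 \leq j \leq n$, and in particular $\mathrm{Hom}_A(Q,F) = \mathrm{Hom}_A(K^n,F)=0$.

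Dimension shifting across the same short exact sequences is valid precisely because $\mathrm{Ext}^j(J^k,F)$ vanishes throughout the range $0 \leq j \leq n-1$; iterating produces
$$\mathrm{Ext}^i_A(Q,F) = \mathrm{Ext}^i_A(K^n,F) \cong \mathrm{Ext}^{i-1}_A(K^{n-1},F) \cong \cdots \cong \mathrm{Hom}_A(K^{n-i},F) = 0$$
for every $1 \leq i \leq n-1$, while the $i=0$ case has already been dealt with. Together these vanishings imply $Q\in \mathrm{gen}_{n-1}(Ae)$ by Proposition \ref{tooapt}.

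The argument is essentially homological bookkeeping; the two non-formal ingredients are the identification $\mathrm{GI}(A)=\Omega^{-n}(\mathrm{mod}(A))$, which places $Q$ at the end of a length-$n$ coresolution by injectives in $\mathrm{add}(DA)$, and the observation that $DA\in\mathrm{gen}_{n-1}(Ae) \subseteq \mathrm{gen}_0(Ae)$ already forces $\mathrm{Hom}_A$ into $F$ to vanish on every intermediate cosyzygy, so that the dimension-shift chain collapses to zero. I do not anticipate a genuine obstacle beyond carefully tracking the index ranges in which the shifts remain isomorphisms.
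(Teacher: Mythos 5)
Your proposal is correct and follows essentially the same route as the paper: reduce via Proposition \ref{tooapt} to Ext-vanishing against injective $A/\langle e\rangle$-modules, write $Q=\Omega^{-n}(N)$ using (the dual of) Proposition \ref{omega}, and dimension-shift along the length-$n$ injective coresolution using the hypothesis $DA\in\mathrm{gen}_{n-1}(Ae)$. You are in fact a bit more explicit than the paper in noting that the duality must be invoked to pass from $\mathrm{GP}=\Omega^n(\mathrm{mod})$ to $\mathrm{GI}=\Omega^{-n}(\mathrm{mod})$ and in tracking the index range over which the dimension-shift isomorphisms hold.
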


\begin{proof}
By Proposition \ref{omega}, any $Q\in \overline{\mathrm{GI}}(A)$ is of the form $\Omega^{-n}(N)$ for some $A$-module $N$. So there is an exact sequence 
$$0\rightarrow N \rightarrow I_0 \rightarrow \cdots \rightarrow I_{n-1} \rightarrow Q\rightarrow 0.$$
This induces exact an exact sequence for any injective $A/\langle e \rangle$-module $F$.
$$0\rightarrow \mathrm{Hom}_A (\Omega^{-1}(N),F)\rightarrow \mathrm{Hom}_A(I_{0},F).$$ Since $DA\in\mathrm{gen}(Ae)$, Lemma \ref{tooapt} implies that $\mathrm{Hom}_A(I_0,F)=0$. This shows that $ \mathrm{Hom}_A (\Omega^{-i}(N),F)=0$ for all $1\leq i \leq n$. There are exact sequences for all $1\leq i\leq n-2$
$$ \mathrm{Ext}^{i-1}_A (\Omega^{-i}(N),F)\rightarrow \mathrm{Ext}^{i}_A (\Omega^{-i-1}(N),F)\rightarrow \mathrm{Ext}^{i}_A(I_{i},F).$$ Hence, by induction, $\mathrm{Ext}^i_A(Q,F)=0$ for all $0\leq i\leq n-1$, and any injective $A /\langle e \rangle$-module $F$. Therefore, by Proposition \ref{tooapt}, $Q\in\mathrm{gen}_{n-1}(Ae)$ for all $Q\in \overline{\mathrm{GI}}(A)$. 
 \end{proof}

Another simple calculation shows the following:
\begin{lm}\label{desc}
Let $A$ be an $n$-Iwanaga-Gorenstein algebra with fabric idempotent $f$ such that $\mathrm{fab.dim}_A(A/\langle f \rangle)<n$. Then for any $M\in\mathrm{mod}(A)$, we have $$\Omega^n(M)\in \mathrm{gen}_\infty (Af).$$  

\end{lm}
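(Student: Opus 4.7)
My plan is to translate $\Omega^n M \in \mathrm{gen}_\infty(Af)$ into a Tor-vanishing statement via Proposition \ref{tooapt}, and then to reduce the problem to bounding the projective dimension of $A/\langle f\rangle$ on the right. Specifically, by Proposition \ref{tooapt} the containment is equivalent to $\mathrm{Ext}^i_A(\Omega^n M, F) = 0$ for every $i \geq 0$ and every injective $A/\langle f\rangle$-module $F$, hence to $\mathrm{Ext}^j_A(M, F) = 0$ for all $j \geq n$. A standard hom-tensor adjunction applied to an $A$-projective resolution $P_{\bullet} \to M$, together with the exactness of $\mathrm{Hom}_{A/\langle f\rangle}(-, F)$ for injective $F$, produces an isomorphism
\[
\mathrm{Ext}^j_A(M, F) \;\cong\; \mathrm{Hom}_{A/\langle f\rangle}\!\bigl(\mathrm{Tor}^A_j(A/\langle f\rangle, M),\, F\bigr),
\]
and since every $A/\langle f\rangle$-module embeds in an injective, vanishing for every $F$ is equivalent to $\mathrm{Tor}^A_j(A/\langle f\rangle, M) = 0$ for all $j \geq n$. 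The conclusion of the lemma is therefore equivalent to the bound $\mathrm{proj.dim}_{A^{\mathrm{op}}}(A/\langle f\rangle) \leq n-1$, which is uniform in $M$.

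To establish this bound I would argue vertex-by-vertex. Decomposing $A/\langle f\rangle = \bigoplus_{i \notin F} e_i(A/\langle f\rangle)$ as right $A$-modules, it suffices to show $\mathrm{proj.dim}_{A^{\mathrm{op}}}(e_i(A/\langle f\rangle)) \leq n-1$ for each $i \notin F$. Via the duality $D$ this translates into an injective-dimension bound $\mathrm{inj.dim}_A(F_i) \leq n-1$, where $F_i := D(e_i(A/\langle f\rangle))$ is the indecomposable injective $A/\langle f\rangle$-module at vertex $i$. For each such $i$, the hypothesis $\mathrm{fab.dim}_A(A/\langle f\rangle) < n$ supplies an integer $k_i < n$ and an $A$-injective module $I_i$ with $Ae_i/\langle f\rangle = \Omega^{k_i}_A(I_i)$, i.e.\ an exact sequence
\[
0 \to Ae_i/\langle f\rangle \to Q_{k_i - 1} \to \cdots \to Q_0 \to I_i \to 0
\]
with $A$-projective $Q_j$. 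I would use this sequence, together with the fabric presentation $0 \to Ae_{i'} \to Ae_i \to Ae_i/\langle f\rangle \to 0$ from Proposition \ref{masod} and the Nakayama correspondence $\nu_{A/\langle f\rangle}$ pairing projectives and injectives over the quotient, to splice together an $A$-injective resolution of $F_i$ of length at most $k_i \leq n-1$.

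The main obstacle is making rigorous the passage from the fabric-dimension data of the projective $Ae_i/\langle f\rangle$, a left-sided assertion, to the injective dimension of the corresponding $F_i$, which is a dual right-sided statement. This requires exploiting the left/right symmetry built into the fabric axioms (\ref{ef1})--(\ref{ef3}) of Proposition \ref{masod} --- in particular, that the combinatorial setup for fabric idempotents is essentially self-dual --- and carefully pasting the two exact sequences above through the Nakayama functor $\nu_{A/\langle f\rangle}$ of the quotient algebra. Once the injective resolution of $F_i$ is in hand, the bound $\mathrm{inj.dim}_A(F_i) \leq n-1$ follows, and hence so does the original claim.
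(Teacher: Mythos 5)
Your opening move is actually the correct translation: for $\Omega^n M\in\mathrm{gen}_\infty(Af)$ one needs $\mathrm{Ext}^i_A(\Omega^n M,F)=0$ for all $i\ge 0$ and all injective $A/\langle f\rangle$-modules $F$, exactly as the second half of Proposition~\ref{tooapt} states. This is worth noting because the paper's own proof goes a quite different route: it takes an indecomposable projective $A/\langle f\rangle$-module $P$, writes $P=\Omega^m_A(I)$ using the fabric-dimension hypothesis, and then estimates $\mathrm{Ext}^i_A(P,\Omega^n M)$ by a dimension shift --- that is, it verifies the condition from the \emph{first} half of Proposition~\ref{tooapt} and accordingly its last line asserts $\Omega^n M\in\mathrm{cogen}_\infty(fA)$. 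So the paper never passes through a Tor computation or a right-sided projective-dimension bound at all; your strategy is genuinely different.

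The difficulty is that the reduction you then perform is not an equivalence at the crucial degree. From $\mathrm{Ext}^i_A(\Omega^n M,F)=0$ for $i\ge 1$ you correctly get $\mathrm{Ext}^j_A(M,F)=0$ for $j\ge n+1$ by dimension shift, but the case $i=0$, namely $\mathrm{Hom}_A(\Omega^n M,F)=0$, is \emph{strictly stronger} than $\mathrm{Ext}^n_A(M,F)=0$: via the adjunction it is the assertion that $(A/\langle f\rangle)\otimes_A\Omega^n M=0$, i.e.\ that $\mathrm{top}(\Omega^n M)$ has no composition factor supported outside $F$, whereas $\mathrm{Ext}^n_A(M,F)=0$ only controls a subquotient of $\mathrm{Hom}_A(\Omega^n M,F)$. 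Consequently the sentence ``the conclusion of the lemma is therefore equivalent to $\mathrm{proj.dim}_{A^{\mathrm{op}}}(A/\langle f\rangle)\le n-1$'' is not justified in the direction you need: the bound gives $\mathrm{Tor}^A_j(A/\langle f\rangle,M)=0$ for $j\ge n$ and all $M$, which only forces $(A/\langle f\rangle)\otimes_A\Omega^n M$ to be a \emph{submodule} of $(A/\langle f\rangle)\otimes_A P_{n-1}$, not to vanish. Establishing $\mathrm{gen}_\infty(Af)$ actually requires controlling the tops of the projectives $P_j$ ($j\ge n$) in a minimal resolution of $M$, which is governed by $\mathrm{Tor}^A_j$ against the simple right modules $D(A/\langle f\rangle e_i)$, $i\notin F$, rather than against $A/\langle f\rangle$ itself; these projective dimensions need not agree with $\mathrm{proj.dim}_{A^{\mathrm{op}}}(A/\langle f\rangle)$. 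Finally, your last paragraph --- dualizing the left-sided fabric-dimension datum into a right-sided projective/injective-dimension bound by ``splicing through $\nu_{A/\langle f\rangle}$'' --- is precisely where all the work sits and is not carried out; the paper sidesteps this left/right translation entirely by feeding the fabric-dimension hypothesis directly into a two-variable $\mathrm{Ext}$ computation.
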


\begin{proof}
Let $Q=\Omega^{n}(M)$ for some $A$-module $M$. For any indecomposable projective $A/\langle f\rangle$-module $P$, let $\mathrm{fab.dim}(P)=m$. Then $\Omega^m(I)=P$ for some injective $A$-module $I$. We calculate for $i>0$,

\begin{align*}
\mathrm{Ext}_A^{i}(P,Q)&=\mathrm{Ext}_A^{i}(\Omega^{m}(I), \Omega^n(M))\\
&=\mathrm{Ext}^{i}_A(I, \Omega^{n-m}(M))\\
&=0,
\end{align*}
as $n>m$. So Proposition \ref{tooapt} implies that $Q\in \mathrm{cogen}_{\infty}(fA)$.
\end{proof}

\begin{prop}\label{fabdd}
	Let $A$ be an $n$-Iwanaga-Gorenstein algebra with fabric idempotent $f$ satisfying $\mathrm{fab.dim}_A(A/\langle f \rangle)=n-1$. Then the following holds\begin{align*} 
	DA\in \mathrm{gen}_{m}(Ah) \implies M&\in \mathrm{gen}_{m}(Ah)  , &&\text{ for all $M\in \mathrm{GI}(A)$, $m<n$};\\
	\Omega^n(M)&\in  \mathrm{gen}_{\infty}(Af),&&\text{ for all $M\in \mathrm{mod}(A)$}.
	\end{align*}  
Dually, if $A$ is an $n$-Iwanaga-Gorenstein algebra with cofabric idempotent $f$ satisfying $\mathrm{cofab.dim}_A(DA/\langle f \rangle)=n-1$. Then\begin{align*} 
	A\in \mathrm{cogen}_{m}(hA) \implies M&\in \mathrm{cogen}_{m}(hA)  , &&\text{ for all $M\in \mathrm{GP}(A)$, $m<n$};\\
	\Omega^{-n}(M)&\in  \mathrm{cogen}_{\infty}(fA),&&\text{ for all $M\in \mathrm{mod}(A)$}.
	\end{align*}  
\end{prop}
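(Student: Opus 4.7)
The proposition bundles four assertions together, but the homological content has already been established in the preceding lemmata. The second line of the first display is the content of Lemma \ref{desc}, which applies under the slightly weaker hypothesis $\mathrm{fab.dim}_A(A/\langle f \rangle) < n$; this requires no new work. The dual pair of assertions is obtained by transporting the first pair across the $k$-duality $D = \mathrm{Hom}_k(-,k)$, which swaps $\mathrm{GP}$ with $\mathrm{GI}$, $\Omega$ with $\Omega^{-1}$, $\mathrm{gen}$ with $\mathrm{cogen}$, and, by Definition \ref{fabdef}, fabric idempotents on $A$ with cofabric idempotents on $A^{\mathrm{op}}$. So the only substantive step is the first line of the first display.

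For that step the plan is to copy the proof of Lemma \ref{mar} verbatim, with the bound $n-1$ replaced by $m$. Given $M \in \mathrm{GI}(A)$, I would use Proposition \ref{omega} (applied to $A^{\mathrm{op}}$ and dualised) to write $M = \Omega^{-n}(N)$ for some $N \in \mathrm{mod}(A)$; this yields an exact sequence
$$0 \rightarrow N \rightarrow I_0 \rightarrow I_1 \rightarrow \cdots \rightarrow I_{n-1} \rightarrow M \rightarrow 0$$
with each $I_i$ injective, hence in $\mathrm{add}(DA)$. Proposition \ref{tooapt} turns the hypothesis $DA \in \mathrm{gen}_m(Ah)$ into $\mathrm{Ext}^j_A(I_i,F)=0$ for $0 \leq j \leq m$ and every injective $A/\langle h\rangle$-module $F$. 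Splitting the coresolution into short exact sequences $0 \rightarrow \Omega^{-(i-1)}(N) \rightarrow I_{i-1} \rightarrow \Omega^{-i}(N) \rightarrow 0$ and dimension-shifting exactly as in Lemma \ref{mar}, one obtains by induction on $j$ the vanishing $\mathrm{Ext}^j_A(\Omega^{-i}(N),F)=0$ for all $i \geq j+1$. Taking $i=n$ and using $m \leq n-1$ gives $\mathrm{Ext}^j_A(M,F)=0$ for $0 \leq j \leq m$, and a final application of Proposition \ref{tooapt} returns $M \in \mathrm{gen}_m(Ah)$.

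The bound $m < n$ is exactly what is needed for the dimension shift to run along the full length of the coresolution, so there is no room to spare; this is the only point at which the specific relationship between $m$ and $n$ enters. The degenerate case where $M$ is already injective (and so need not literally be an $n$-fold cosyzygy of some $N$) can be dispatched separately and trivially: $\mathrm{Ext}^j_A(M,F)$ vanishes for $j>0$ since $M$ is injective, and $\mathrm{Hom}_A(M,F)=0$ follows from $M \in \mathrm{add}(DA)$ together with $DA \in \mathrm{gen}_0(Ah)$. I do not expect any genuine obstacle here; the main thing to watch is to keep the two distinct idempotents $f$ and $h$ from being conflated, as $f$ controls only the second line of the display and is irrelevant to the first.
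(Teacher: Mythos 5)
Your proof is correct and follows the same route as the paper, whose own proof of Proposition~\ref{fabdd} simply invokes Lemma~\ref{mar}, Lemma~\ref{desc}, and their duals. You usefully notice and fill two small gaps that the paper glosses over: Lemma~\ref{mar} as literally stated only treats the bound $m=n-1$, so the dimension-shift argument must be re-run with the general bound $m<n$ (as you do), and the proposition speaks of all of $\mathrm{GI}(A)$ whereas the lemma is phrased for the non-injective Gorenstein injectives, a discrepancy you dispatch with the trivial injective case at the end.
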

\begin{proof}
This is now Lemma \ref{mar}, Lemma \ref{desc}, and the dual results. \end{proof}

If $A$ has a fabric idempotent $f$, then it is possible to compare the singularity categories of $\mathrm{mod}(A)$ and of $\mathrm{mod}(fAf)$. 

\begin{theorem}\cite[Corollary 3.3]{chen3} \cite[Theorem 5.2]{pss}\label{chen}
Let $A$ be a finite-dimensional algebra and $f$ an idempotent of $A$. Then there is an equivalence $$D_{\mathrm{sg}}(A)\cong D_{\mathrm{sg}}(fAf)$$ 
if and only if the algebra $A$ satisfies $\mathrm{proj.dim}_A(M)<\infty$ for all modules \\ $M\in \mathrm{mod}(A/\langle f \rangle)$ and $\mathrm{proj.dim}_{fAf}(fA)<\infty$. 
\end{theorem}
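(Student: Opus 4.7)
The plan is to use the Schur functor $S = fA\otimes_A - \cong \mathrm{Hom}_A(Af,-)\colon \mathrm{mod}(A)\to \mathrm{mod}(fAf)$ together with its left adjoint $T = Af\otimes_{fAf}-$, and to show that the two displayed conditions are precisely what is needed to make the induced derived functors quasi-inverse equivalences on the singularity categories. Since $fA$ is projective as a right $A$-module, $S$ is exact, and since $S(Ae_i)=fAe_i$ is a summand of $fA$ (hence projective over $fAf$), the functor $S$ preserves $K^b(\mathrm{proj})$ and so descends to a triangulated $\bar S\colon D_{\mathrm{sg}}(A)\to D_{\mathrm{sg}}(fAf)$. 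Dually, $T$ sends summands of $fAf$ to summands of $Af$; condition (2) is exactly what is needed for the derived functor $\mathbb{L}T$ to be bounded on $D^b(\mathrm{mod}(fAf))$ and thus to descend to $\bar T\colon D_{\mathrm{sg}}(fAf)\to D_{\mathrm{sg}}(A)$.

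For the ``if'' direction I would then test the adjunction on these induced functors. The unit $\eta\colon \mathrm{id}\to ST$ is already an isomorphism on modules via $fA\otimes_A Af\cong fAf$, so $\bar S\,\bar T\cong \mathrm{id}$ is immediate. The counit $\varepsilon\colon TS(M)\to M$ realises classically as the inclusion $AfA\otimes_A M\to M$, and its cone in $D^b(\mathrm{mod}(A))$ has cohomology consisting of $A$-modules annihilated by $f$, that is, objects of $\mathrm{mod}(A/\langle f\rangle)$. Condition (1) says every such module has finite projective dimension over $A$, so the cone is perfect and $\varepsilon$ becomes an isomorphism in $D_{\mathrm{sg}}(A)$, giving $\bar T\,\bar S\cong \mathrm{id}$.

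For the ``only if'' direction, given an equivalence (which one argues must be realised by $\bar S$), any $M\in \mathrm{mod}(A/\langle f\rangle)$ satisfies $S(M)=0$ and hence vanishes in $D_{\mathrm{sg}}(A)$, forcing $\mathrm{proj.dim}_A(M)<\infty$ and yielding condition (1). Condition (2) is recovered by requiring that a quasi-inverse of $\bar S$ be defined on the whole of $D^b(\mathrm{mod}(fAf))$; the natural candidate $\mathbb{L}T$ having bounded cohomology there is equivalent to $\mathrm{proj.dim}_{fAf}(fA)<\infty$. The main obstacle I anticipate is the derived-counit computation: one must check that not only the classical cokernel $M/\langle f\rangle M$ but also the higher terms $\mathrm{Tor}^{fAf}_i(Af, fM)$ lie in $\mathrm{mod}(A/\langle f\rangle)$, so that condition (1) truly kills every cohomology of the cone of $\varepsilon$. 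This bookkeeping with projective resolutions is the technical heart of both \cite{chen3} and \cite{pss}, and once it is in place the two hypotheses of the theorem emerge as the precise requirements for the abstract adjunction argument to go through.
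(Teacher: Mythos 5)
The paper does not prove this theorem at all: it is quoted verbatim from \cite{chen3} and \cite{pss} and used as a black box, so there is no ``paper's own proof'' to compare against. Evaluating your argument on its own merits, the step that fails is the very first one. You claim that $S(Ae_i)=fAe_i$ ``is a summand of $fA$ (hence projective over $fAf$)'' and that $S$ therefore descends automatically to $D_{\mathrm{sg}}$. But $fA$ is in general \emph{not} projective as a left $fAf$-module. For instance, take $A=kQ/I$ with $Q$ the two-cycle $1\xrightarrow{\alpha}2\xrightarrow{\beta}1$ and $I=\langle\alpha\beta\alpha,\beta\alpha\beta\rangle$, and $f=e_1$. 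Then $fAf\cong k[t]/(t^2)$ with $t=\beta\alpha$, while $fA$ has $k$-basis $\{e_1,\beta,\beta\alpha\}$ and decomposes as $k[t]/(t^2)\oplus k$ over $fAf$; the summand $fAe_2=k\beta$ is the trivial $k[t]/(t^2)$-module, which is not projective. So the hypothesis $\mathrm{proj.dim}_{fAf}(fA)<\infty$ is not a bookkeeping condition for the quasi-inverse: it is already required to ensure that $S(A)=fA$ lies in $D^b_{\mathrm{perf}}(fAf)$, i.e.\ that the Schur functor preserves perfect complexes and descends to $\bar S\colon D_{\mathrm{sg}}(A)\to D_{\mathrm{sg}}(fAf)$ at all. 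As written, your proof would conclude a singular equivalence from condition (1) alone, which is false (e.g.\ any $A$ of finite global dimension with $fAf$ of infinite global dimension).

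Two further points worth flagging. First, the boundedness of $\mathbb{L}(Af\otimes_{fAf}-)$ is governed by the projective (equivalently flat) dimension of $Af$ as a \emph{right} $fAf$-module, not by $\mathrm{proj.dim}_{fAf}(fA)$; the latter is the natural condition for the \emph{right} adjoint $\mathbb{R}\mathrm{Hom}_{fAf}(fA,-)$. If you insist on the left adjoint you must either pass to the opposite algebra or justify why the two sides agree in this setting. Second, in the ``only if'' direction the phrase ``which one argues must be realised by $\bar S$'' hides the entire content of that implication; an abstract triangle equivalence $D_{\mathrm{sg}}(A)\cong D_{\mathrm{sg}}(fAf)$ gives no reason to be compatible with the recollement, and both \cite{chen3} and \cite{pss} in fact prove the ``only if'' direction for the \emph{specific} induced functor, which is also what Corollary \ref{chenii} in this paper uses.
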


This has the following consequence for fabric idempotetents.
\begin{cor}\label{chenii}
Let $A$ be a finite-dimensional algebra with fabric idempotent $f$. Then if $\mathrm{gl.dim}(A/\langle f\rangle)<\infty$, then there is an equivalence $$D_{\mathrm{sg}}(A)\cong D_{\mathrm{sg}}(fAf).$$  
\end{cor}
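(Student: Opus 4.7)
The plan is to apply Theorem \ref{chen}, which reduces the statement to verifying two finiteness conditions: (i) $\mathrm{proj.dim}_A(M)<\infty$ for every $M\in\mathrm{mod}(A/\langle f\rangle)$, and (ii) $\mathrm{proj.dim}_{fAf}(fA)<\infty$.

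For condition (i), I would observe that as an $A$-module, $A/\langle f\rangle$ decomposes as $\bigoplus_{i\notin F}Ae_i/\langle f\rangle$, so each summand inherits the bound $\mathrm{proj.dim}_A(Ae_i/\langle f\rangle)\leq 1$ from the fabric axiom. Given $M\in\mathrm{mod}(A/\langle f\rangle)$, let $d=\mathrm{gl.dim}(A/\langle f\rangle)$; take a projective resolution of $M$ of length $d$ over $A/\langle f\rangle$ and splice the resulting short exact sequences of $A$-modules to conclude $\mathrm{proj.dim}_A(M)\leq d+1<\infty$ by standard dimension shifting.

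For condition (ii), the strategy is to prove the stronger statement that $fA$ is already projective over $fAf$. Decompose $fA=\bigoplus_i fAe_i$. For $i\in F$ we have $fAe_i=fAfe_i$, a direct summand of $fAf$. For $i\notin F$, Proposition \ref{masod} either ensures that $Ae_i/\langle f\rangle$ is already projective over $A$ (whence $AfAe_i=0$ and $fAe_i=0$) or supplies a short exact sequence $0\to Ae_{i'}\to Ae_i\to Ae_i/\langle f\rangle\to 0$ with $i'\in F$. Since any $A/\langle f\rangle$-module is annihilated by $f$ and the functor $M\mapsto fM$ is exact, this sequence collapses to an isomorphism $fAe_i\cong fAe_{i'}=fAfe_{i'}$, which is projective over $fAf$.

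The delicate point is condition (ii): a generic projective $A$-module need not map to a projective $fAf$-module under $f\cdot$, so what rescues the argument is precisely the fabric-idempotent guarantee that each first syzygy $\Omega_A(Ae_i/\langle f\rangle)$ lies in $\mathrm{add}(Af)$. Once both (i) and (ii) are verified, Theorem \ref{chen} delivers the desired equivalence $D_{\mathrm{sg}}(A)\cong D_{\mathrm{sg}}(fAf)$.
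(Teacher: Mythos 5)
Your proposal is correct and follows essentially the same route as the paper: verify the two hypotheses of Theorem \ref{chen}, establishing (ii) by applying the exact functor $f(-)\cong\mathrm{Hom}_A(Af,-)$ to the short exact sequences from Proposition \ref{masod} to see that $fA$ is projective over $fAf$. The only cosmetic difference is that for (i) the paper simply cites Lemma 5.8 of \cite{apt}, whereas you supply the underlying dimension-shifting argument directly, which is the content of that lemma.
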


\begin{proof}
Apply $\mathrm{Hom}_A(Af,-)$ to the equation \eqref{fe2} in Proposition \ref{masod}. This shows that $fA$ is projective as an $fAf$-module.  Now, by definition $\mathrm{proj.dim}_A(A/\langle f\rangle )\leq 1$. So if $\mathrm{gl.dim}(A/\langle f\rangle)<\infty$, then by Lemma 5.8 of \cite{apt}, $\mathrm{proj.dim}_A(M)<\infty$ for all modules $M\in \mathrm{mod}(A/\langle f \rangle)$. 
Therefore the conditions of Theorem \ref{chen} are satisfied and there is a singular equivalence $$D_{\mathrm{sg}}(A)\cong D_{\mathrm{sg}}(fAf).$$  
\end{proof}

\section{Examples of fabric idempotent ideals}

\begin{eg}
Let $A=kQ/I$ be the algebra with quiver $Q$:

$$\begin{tikzpicture}
\node(d) at (-2,0.75){$1$};
\node(a) at (0,0){$3$};
\node(c) at (2,0.75){$5$};
\node(e) at (-2,-0.75){$2$};
\node(f) at (2,-0.75){$4$};

\path[->] (d) edgenode[left]{$\alpha$} (e);
\path[->] (e) edgenode[below]{$\beta$} (a);
\path[->] (a) edgenode[above]{$\gamma$} (d);

\path[->] (a) edgenode[below]{$\delta$} (f);
\path[->] (f) edgenode[right]{$\epsilon$} (c);
\path[->] (c) edgenode[above]{$\zeta$} (a);
\end{tikzpicture}$$
and relations $I=\langle\zeta\epsilon\delta-\beta\alpha\gamma,\alpha\gamma\beta,\epsilon\delta\zeta\rangle$. Let $f$ be the idempotent $e_2+e_3+e_5$, so that $_A A/\langle f \rangle=S_1\oplus S_4$.  We have the following short exact sequences:
\begin{align*}
0\rightarrow P_5\rightarrow P_4\rightarrow S_4\rightarrow 0;\\
0\rightarrow P_2\rightarrow P_1\rightarrow S_1\rightarrow 0.
\end{align*}

Hence $\mathrm{proj.dim}_A(A/\langle f\rangle))\leq 1$, and it can be checked (for example, using the conditions of Proposition \ref{masod}) that $$\tau_A(A/\langle f\rangle)=DA/\langle e \rangle$$ for the idempotent $e=e_1+e_3+e_4$. 
To check the third condition of Definition \ref{fabdef}, we use the resolutions,
\begin{align*}
0\rightarrow P_5\rightarrow P_4\rightarrow P_3\rightarrow I_1\rightarrow 0;\\
0\rightarrow P_2\rightarrow P_1\rightarrow P_3\rightarrow I_4\rightarrow 0.
\end{align*}

Hence, $S_4=\Omega_A(I_1)$ and $S_1=\Omega_A(I_4)$. Calculating the remaining projective resolutions

\begin{align*}
0\rightarrow P_5\rightarrow P_4\rightarrow P_4\rightarrow I_2\rightarrow 0;\\
0\rightarrow P_2\rightarrow P_1\rightarrow P_1\rightarrow I_5\rightarrow 0;\\
0\rightarrow P_2\oplus P_5\rightarrow P_1\oplus P_4\rightarrow P_3\rightarrow I_3\rightarrow 0,
\end{align*}
and we see that $DA\in \mathrm{gen}_1(Ae)$ and therefore $f$ is a fabric idempotent of $A$ satisfying $\mathrm{fab.dim}(A)=1$.

Applying Proposition \ref{fabdd} we know that for any $M\in \mathrm{GI}(A)$, the first two terms in the projective resolution of $M$ are generated by $Ae$ and the remaining terms are generated by $Af$. 
\end{eg}

\subsection{$n$-Canonical algebras}

Canonical algebras were introduced by Ringel \cite{ring}. The $n$-canonical algebras were introduced by Herschend-Iyama-Minamoto-Oppermann \cite{himo} to extend the class of canonical algebras to higher Auslander-Reiten theory. An explicit description of $n$-canonical algebras in terms of a quiver with relations was given in \cite[Theorem 3.46]{himo}. 

\begin{eg}
As an example, the $2$-canonical algebra of type $(2,2,1)$ has the quiver
$$\begin{tikzpicture}[scale=1.5]
\node(a) at (0,0){$1$};
\node(b) at (0,1){$2$};
\node(c) at (0,2){$4$};
\node(d) at (1,0){$3$};
\node(e) at (1,1){$5$};
\node(f) at (1,2){$7$};
\node(g) at (2,0){$4$};
\node(h) at (2,1){$6$};
\node(i) at (2,2){$8$};
\node(j) at (-2,-2){$4$};
\node(k) at (-2,-1){$6$};
\node(l) at (-2,0){$8$};
\node(m) at (-1,-2){$7$};
\node(n) at (0,-2){$8$};

\path[->] (a) edgenode[right]{$x_1$} (b);
\path[->] (b) edgenode[right]{$x_1$} (c);
\path[->] (d) edgenode[right]{$x_1$} (e);
\path[->] (e) edgenode[right]{$x_1$} (f);
\path[->] (g) edgenode[right]{$x_1$} (h);
\path[->] (h) edgenode[right]{$x_1$} (i);
\path[->] (j) edgenode[right]{$x_1$} (k);
\path[->] (k) edgenode[right]{$x_1$} (l);

\path[->] (a) edgenode[above]{$x_2$} (d);
\path[->] (b) edgenode[above]{$x_2$} (e);
\path[->] (c) edgenode[above]{$x_2$} (f);
\path[->] (d) edgenode[above]{$x_2$} (g);
\path[->] (e) edgenode[above]{$x_2$} (h);
\path[->] (f) edgenode[above]{$x_2$} (i);
\path[->] (j) edgenode[above]{$x_2$} (m);
\path[->] (m) edgenode[above]{$x_2$} (n);

\path[->] (a) edgenode[right]{$x_3$} (j);
\path[->] (b) edgenode[right]{$x_3$} (k);
\path[->] (c) edgenode[right]{$x_3$} (l);
\path[->] (d) edgenode[right]{$x_3$} (m);
\path[->] (g) edgenode[right]{$x_3$} (n);
\end{tikzpicture}$$
where vertices with the same labels, and the corresponding arrows, should be identified. The relations are given by $x_1x_2=x_2x_1$, $x_2x_3=x_3x_2$ and $x_3x_1=x_1x_3$. Applying Proposition \ref{masod} we see that $f=e_1+e_2+e_4+e_6+e_8$ is a fabric idempotent together with $e=e_3+e_5+e_6+e_7+e_8$. The algebra $fAf$ is the $2$-canonical algebra of type $(2,1,1)$ with quiver 

$$\begin{tikzpicture}[scale=1.5]
\node(a) at (0,0){$1$};
\node(b) at (0,1){$2$};
\node(c) at (0,2){$4$};
\node(g) at (2,0){$4$};
\node(h) at (2,1){$6$};
\node(i) at (2,2){$8$};
\node(j) at (-2,-2){$4$};
\node(k) at (-2,-1){$6$};
\node(l) at (-2,0){$8$};
\node(n) at (0,-2){$8$};

\path[->] (a) edgenode[right]{$x_1$} (b);
\path[->] (b) edgenode[right]{$x_1$} (c);
\path[->] (g) edgenode[right]{$x_1$} (h);
\path[->] (h) edgenode[right]{$x_1$} (i);
\path[->] (j) edgenode[right]{$x_1$} (k);
\path[->] (k) edgenode[right]{$x_1$} (l);

\path[->] (a) edgenode[above]{$x_2$} (g);
\path[->] (b) edgenode[above]{$x_2$} (h);
\path[->] (c) edgenode[above]{$x_2$} (i);
\path[->] (j) edgenode[above]{$x_2$} (n);

\path[->] (a) edgenode[right]{$x_3$} (j);
\path[->] (b) edgenode[right]{$x_3$} (k);
\path[->] (c) edgenode[right]{$x_3$} (l);
\path[->] (g) edgenode[right]{$x_3$} (n);
\end{tikzpicture}$$
The relations are again given by $x_1x_2=x_2x_1$, $x_2x_3=x_3x_2$ and $x_3x_1=x_1x_3$. We may apply Proposition \ref{masod} once more to show that the idempotent $f^\prime =e_1+e_4+e_8$ is a fabric idempotent of $fAf$. Finally $f^\prime fAff^\prime$ is the $2$-Beilinson algebra \cite{Be} with quiver

$$\begin{tikzpicture}[scale=1]
\node(a) at (0,0){$1$};
\node(c) at (0,2){$4$};
\node(g) at (2,0){$4$};
\node(i) at (2,2){$8$};
\node(j) at (-1.5,-1.5){$4$};
\node(l) at (-1.5,0.5){$8$};
\node(n) at (0.5,-1.5){$8$};

\path[->] (a) edgenode[right]{$x_1$} (c);
\path[->] (g) edgenode[right]{$x_1$} (i);
\path[->] (j) edgenode[right]{$x_1$} (l);

\path[->] (a) edgenode[above]{$x_2$} (g);
\path[->] (c) edgenode[above]{$x_2$} (i);
\path[->] (j) edgenode[above]{$x_2$} (n);

\path[->] (a) edgenode[right]{$x_3$} (j);
\path[->] (c) edgenode[below]{$x_3$} (l);
\path[->] (g) edgenode[right]{$x_3$} (n);
\end{tikzpicture}$$
and relations $x_1x_2=x_2x_1$, $x_2x_3=x_3x_2$ and $x_3x_1=x_1x_3$.
\end{eg}
 
\subsection{$n$-Auslander-Gorenstein algebras}
For an artin algebra $\Lambda$, define the \emph{dominant dimension} $\mathrm{dom.dim}(\Lambda)$ to be the number $n$ such that for a minimal injective resolution of $\Lambda$:
$$0\rightarrow \Lambda\rightarrow I_0\rightarrow \cdots \rightarrow I_{n-1}\rightarrow I_{n}\rightarrow \cdots$$ the modules $I_0,\ldots, I_{n-1}$ are projective-injective and $I_{n}$ is not projective. A subcategory $\mathcal{C}$ of $\mathrm{mod}(\Lambda)$ is \emph{precovering} if for any $M\in \mathrm{mod}(\Lambda)$ there is an object $C_M\in\mathcal{C}$ and a morphism $f:C_M\rightarrow M$ such that for any morphism $X\rightarrow M$ with $X\in \mathcal{C}$ factors through $f$; that there is a commutative diagram:

$$\begin{tikzcd} 
\ &X\arrow{d}\arrow[dotted]{dl}[above]{\exists}\\
C_M\arrow{r}{f}&M
\end{tikzcd}$$
The dual notion of precovering is \emph{preenveloping}.  A subcategory $\mathcal{C}$ that is both precovering and preenveloping is called \emph{functorially finite}.
An artin algebra $\Lambda$ is \emph{$n$-Auslander-Gorenstein algebra} \cite{is} if it satisfies $$\mathrm{dom.dim}(\Lambda)\geq n+1 \geq \mathrm{inj.dim}(_\Lambda\Lambda).$$
On the other hand, for a finite-dimensional algebra $A$, a functorially-finite subcategory $\mathcal{C}$ of $\mathrm{mod}(A)$ is an \emph{$n$-precluster tilting subcategory} if it satisfies the following conditions:
\begin{itemize}
\item The subcategory $\mathcal{C}$ is a generator-cogenerator for $\mathrm{mod}(A)$. 
\item There are inclusions $\tau_n(\mathcal{C})\subseteq \mathcal{C}$ and $\tau^-_n(\mathcal{C})\subseteq \mathcal{C}$.
\item There is an equality $\mathrm{Ext}^i_A(\mathcal{C},\mathcal{C})=0$ for $0<i<n$.
\end{itemize}
If $\mathcal{C}$ has an additive generator $M$, then $M$ is called an $n$-precluster-tilting module. Alternatively, $M$ is called an $n$-ortho-symmetric module in \cite{chenko}. 

\begin{prop}\cite[Proposition 4.3]{is}
Let $A$ be a finite-dimensional $k$-algebra, and $M$ an $n$-precluster tilting $A$-module with $n\geq 1$. Then $\Lambda=\mathrm{End}_A(M)^\mathrm{op}$ is an $n$-Auslander-Gorenstein algebra that satisfies $\mathrm{proj.inj}(\Lambda)=\mathrm{add}(_\Lambda A)$. 
\end{prop}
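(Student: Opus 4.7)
My plan is to follow the Iyama--Solberg strategy, building the proof around the functor $F=\mathrm{Hom}_A(M,-)\colon \mathrm{mod}(A)\to \mathrm{mod}(\Lambda)$, which restricts to an equivalence $\mathrm{add}(M)\xrightarrow{\sim} \mathrm{proj}(\Lambda)$. Since $M$ is a generator-cogenerator, both $A$ and $DA$ lie in $\mathrm{add}(M)$, and a direct Nakayama computation gives $\nu_\Lambda F(A)\cong D\mathrm{Hom}_A(A,M)\cong DM\cong \mathrm{Hom}_A(M,DA)=F(DA)$. Hence $F(DA)$ is projective-injective in $\mathrm{mod}(\Lambda)$, and more generally $F(I)$ is projective-injective for every injective $A$-module $I$; this gives the inclusion $\mathrm{add}(_\Lambda A)\subseteq \mathrm{proj.inj}(\Lambda)$, with the reverse inclusion extracted via Auslander-Reiten duality combined with the $\tau_n$-closure of $\mathrm{add}(M)$, which rules out any spurious projective-injective summands of $\Lambda$.

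Next I would establish the dominant dimension bound $\mathrm{dom.dim}(\Lambda)\geq n+1$ by a Müller-type argument. Take the minimal injective coresolution $0\to M\to I^0\to I^1\to\cdots$ in $\mathrm{mod}(A)$; by the cogenerator property, each $I^i\in\mathrm{add}(M)$, so applying $F$ produces a sequence of projective $\Lambda$-modules. The vanishing $\mathrm{Ext}^i_A(M,M)=0$ for $0<i<n$ forces exactness through degree $n-1$, producing an exact sequence $0\to\Lambda\to F(I^0)\to F(I^1)\to\cdots\to F(I^n)$ whose first $n+1$ terms are projective-injective by the computation of the preceding paragraph. For the injective dimension bound $\mathrm{inj.dim}(_\Lambda\Lambda)\leq n+1$, the key observation is that the $\tau^-_n$-closure of $\mathrm{add}(M)$ allows one to terminate the coresolution after $n+1$ steps with injective cokernel, converting the closure condition on $\mathrm{add}(M)$ into finite injective dimension for $\Lambda$.

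The main obstacle is this final step: showing that $\tau_n$-closure of $\mathrm{add}(M)$ forces the injective coresolution of $\Lambda$ to terminate at degree $n+1$. This is the heart of the correspondence, and requires analysing how the higher Nakayama functor $\nu_n=\tau_n\Omega^n$ on $\mathrm{mod}(A)$ corresponds under $F$ to the Nakayama functor of $\Lambda$; the $\mathrm{Ext}$-vanishing and closure conditions must together yield the required injective envelopes at each degree. Since the result is cited verbatim from \cite{is}, a full account is deferred to that reference, and only the extraction of the projective-injective summand $_\Lambda A$ needs to be foregrounded here, as it feeds into the fabric-idempotent examples developed in the next section.
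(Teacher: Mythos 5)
This proposition is quoted directly from Iyama--Solberg \cite[Proposition 4.3]{is}; the paper offers no proof of its own, so there is no internal argument to compare against, and you correctly flag this yourself by deferring the details to \cite{is}. Your sketch is a plausible reconstruction of the standard line of argument: the identification $\nu_\Lambda F(A)\cong DM\cong F(DA)$ for $F=\mathrm{Hom}_A(M,-)$ does show $F(DA)$ is projective-injective, and applying $F$ to a minimal injective coresolution of $M$, combined with $\mathrm{Ext}^i_A(M,M)=0$ for $0<i<n$, does give $\mathrm{dom.dim}(\Lambda)\geq n+1$. The two steps you flag as remaining are, however, exactly where the content of the theorem lives: the reverse inclusion $\mathrm{proj.inj}(\Lambda)\subseteq\mathrm{add}({}_\Lambda A)$ and the bound $\mathrm{inj.dim}({}_\Lambda\Lambda)\leq n+1$ both hinge on the $\tau_n$- and $\tau_n^{-}$-closure of $\mathrm{add}(M)$ in a way that your outline asserts but does not carry out. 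As a record of the strategy in \cite{is} this is reasonable; as a self-contained proof it has gaps precisely at the critical junctures, which is consistent with the paper's own treatment of the result as an imported black box.
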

In fact, every $n$-Auslander-Gorenstein algebra arises in this fashion, but we will not use the full correspondence in this example. Let $Q$ be a finite quiver without loops or cycles and let $\overline{Q}$ be the \emph{double quiver} of $Q$, obtained from $Q$ be adding arrows $\overline{\alpha}:j\rightarrow i$ for $\alpha:i\rightarrow j\in Q$. The \emph{preprojective algebra} associated to $Q$ is the algebra $A=k\overline{Q}/I$ where $I$ is the ideal generated by $$I=\sum_{\alpha\in Q_1}\alpha\overline{\alpha}- \overline{\alpha}\alpha.$$ 
\begin{lm} \label{ausgor}
Let $Q$ be a quiver of type $A_n$ with vertices $\{1,2,\ldots,n\}$ and $A$ the associated preprojective algebra. Now let $P:=\bigoplus_{j \text{even}}P_j$ and $P^\prime:=\bigoplus_{j\text{odd}}P_j$ be projective-injective $A$-modules and $T:=\mathrm{rad}(P)$. 
Then $B:=\mathrm{End}_A(T\oplus A )^\mathrm{op}$ is $2$-Auslander-Gorenstein and has a fabric idempotent $f$, corresponding to the summand $T\oplus P^\prime$ (with respect to the idempotent $e$ corresponding to the summand $A$). 
\end{lm}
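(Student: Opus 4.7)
The plan is to first invoke the cited Iyama–Solberg correspondence to obtain the $2$-Auslander-Gorenstein property, and then to verify the fabric idempotent axioms by a direct combinatorial computation on the quiver of $B$.

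For the first step, I would show that $M := T \oplus A$ is a $2$-precluster-tilting $A$-module so that Proposition 4.3 of \cite{is} applies, giving $B = \mathrm{End}_A(M)^{\mathrm{op}}$ as a $2$-Auslander-Gorenstein algebra with $\mathrm{proj.inj}(B) = \mathrm{add}({}_B A)$, which corresponds exactly to the idempotent $e$ of the statement. Since $A = \Pi_{A_n}$ is self-injective, the inclusion $A \subseteq M$ already makes $M$ both a generator and a cogenerator, so the non-trivial content is the Ext-vanishing $\mathrm{Ext}^1_A(M,M) = 0$ together with the $\tau_2$-closure of $\mathrm{add}(M)$. Both can be checked using the mesh description of the AR-quiver of $\Pi_{A_n}$, where the summands of $T = \mathrm{rad}(P)$ occupy a translation-closed position relative to the indecomposable projective-injectives $P_j$.

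Next, I would index the primitive idempotents of $B$ by the indecomposable summands of $T \oplus P \oplus P'$, so that $e$ sums the summands of $A = P \oplus P'$ and $f$ sums the summands of $T \oplus P'$. The simples of $B/\langle f \rangle$ then correspond bijectively to the indecomposable summands of $P$. To check $\mathrm{proj.dim}_B(B/\langle f \rangle) \leq 1$, I would apply $\mathrm{Hom}_A(M,-)$ to the short exact sequence
\begin{equation*}
0 \to \mathrm{rad}(P_j) \to P_j \to S_j \to 0
\end{equation*}
in $\mathrm{mod}(A)$ for each even $j$; because $\mathrm{rad}(P_j)$ is a direct summand of $T \in \mathrm{add}(M)$, the result is a length-one projective resolution of the corresponding simple $B$-module. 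The remaining fabric-idempotent axioms reduce, via Proposition \ref{masod}, to checking conditions (\ref{ef1})--(\ref{ef3}) on the Gabriel quiver of $B$: for each even $j$, the unique arrow leaving vertex $P_j$ into the $f$-part lands in the summand of $T$ which is the top of $\mathrm{rad}(P_j)$, and a dual picture holds on the $e$-side for the non-injective summands.

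The main obstacle will be the explicit description of the Gabriel quiver and relations of $B$, since verifying the compatibility conditions (\ref{ef2}) and (\ref{ef3}) of Proposition \ref{masod} requires a careful case analysis of how the arrows and mesh relations of $\Pi_{A_n}$ lift to morphisms among the indecomposable summands of $M$. Everything else---the $2$-Auslander-Gorenstein conclusion, the projective dimension bound, and the identification of $\tau_B$ on the relevant simples---then follows essentially formally from the setup, reducing the lemma to a finite combinatorial verification.
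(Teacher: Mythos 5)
Your plan mirrors the paper's proof quite closely: both invoke Proposition 4.3 of \cite{is} to obtain the $2$-Auslander-Gorenstein property by verifying that $T\oplus A$ is $2$-precluster-tilting, both apply $\mathrm{Hom}_A(T\oplus A,-)$ to the short exact sequences $0\to\mathrm{rad}(P_j)\to P_j\to S_j\to 0$ for even $j$ to produce the length-one projective resolution of $B/\langle f\rangle$, and both reduce the remaining fabric axioms to the quiver conditions of Proposition \ref{masod}. The genuine differences lie in how two sub-steps are handled. For the $\tau_2$-closure of $\mathrm{add}(T\oplus A)$ you propose a mesh case-analysis on the AR-quiver, whereas the paper deduces the much stronger fact that $\tau_2(M)\cong M$ for \emph{every} non-projective $\Pi_{A_n}$-module from the $2$-Calabi–Yau property ($\Omega^3_{A^e}(A)\cong DA$, attributed to Ringel–Schofield), which makes $\tau_2$-closure automatic for any subcategory and eliminates the case analysis. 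For the rigidity $\mathrm{Ext}^1_A(T\oplus A,T)=0$ you propose a separate mesh verification, whereas the paper extracts it for free from the dimension count in the applied $\mathrm{Hom}$-sequence: both $\mathrm{coker}(f_i)$ and $\mathrm{Hom}_A(T\oplus A,S_i)$ are one-dimensional, forcing the connecting $\mathrm{Ext}^1$ term to vanish. One caveat to note: since applying $\mathrm{Hom}_A(T\oplus A,-)$ to the short exact sequence is not automatically right-exact, you must have this $\mathrm{Ext}^1$-vanishing in hand \emph{before} asserting the resulting sequence is a length-one projective resolution; in your proposed order (precluster-tilting first) this is fine, but the dependency should be made explicit rather than described as two independent checks.
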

\begin{proof}
Let $i$ be an even vertex. Then there is an exact sequence 
\begin{align*}
0\rightarrow \mathrm{Hom}_A(T\oplus A,&\  \mathrm{rad}(P_i))\xrightarrow{f_i} \mathrm{Hom}_A(T\oplus A,P_i)\rightarrow  \mathrm{Hom}_A(T\oplus A,S_i)\rightarrow\\ &\rightarrow\mathrm{Ext}^1_A(T\oplus A,\mathrm{rad}(P_i))\rightarrow \mathrm{Ext}^1_A(T\oplus A,P_i)=0 .
\end{align*}
Then $\mathrm{coker}(f_i)$ is given by the homomorphisms from $T\oplus A$ to $P_i$ that do not factor through $ \mathrm{rad}(P_i)$. The only such homomorphism is the identity, $\mathrm{id}:P_i\rightarrow P_i$, so $\mathrm{dim}(\mathrm{coker}(f_i))=1$. Likewise the dimension of $ \mathrm{Hom}_A(T\oplus A,S_i)$ must be one, as by construction the only homomorphism from $T\oplus A$ to $S_i$ is the surjection $P_i\twoheadrightarrow S_i$. Therefore $ \mathrm{Hom}_A(T\oplus A,S_i)\cong \mathrm{coker}(f_i)\cong B/\langle 1-e_i\rangle$ where $e_i$ is the primitive idempotent of $B$ corresponding to the summand $P_i$ of $A\oplus T$. Consolidating over all even vertices results in the projective resolution
\begin{align}
0\rightarrow \mathrm{Hom}_A(T\oplus A, T)\rightarrow \mathrm{Hom}_A(T\oplus A,P)\rightarrow  _BB/\langle f \rangle\rightarrow 0
\end{align}
and the calculation \begin{align}\label{rigid}
\mathrm{Ext}^1_A(T,T)=0.
\end{align}
This shows that $\mathrm{proj.dim}_B(B/\langle f \rangle)\leq 1$. In addition, the conditions of Proposition \ref{masod} are easily seen to be satisfied, as any non-trivial $A$-homomorphism to $P$ factors through its radical. Hence, if $e$ is the idempotent corresponding to the summand $A$ of $T\oplus A$, then $\tau_B(B/\langle f\rangle)=DB/\langle e \rangle$. Note that by its definition, the idempotent $e$ determines a projective-injective $B$-module $Be$.

Next we show that $B$ is $2$-Auslander-Gorenstein, that is $\mathrm{add}(A\oplus T)$ is a 2-precluster tilting subcategory of $\mathrm{mod}(A)$. Certainly, the subcategory $\mathrm{add}(A\oplus T)$ is a generator-cogenerator of $\mathrm{mod}(A)$ and $\mathrm{Ext}^1_A(A\oplus T,A\oplus T)=0$ by  (\ref{rigid}). 
In fact $\tau_2(M)=M$  for any non-projective $A$-module $M$. This can be seen from the 2-Calabi-Yau properties of preprojective algebras. Specifically, by \cite[Proposition 4.10]{bbk}, which the authors attribute to Ringel and Schofield, $\Omega^3_{A^e}(A)\cong DA$, and hence $\Omega^3_A(M)\cong M\otimes_{A}\Omega^3_{A^e}(A)\cong \nu(M)$, using Lemma 2.2 of \cite{es}, and similarly $ \Omega^6(M)\cong M$. For a self-injective algebra, $\tau(M)\cong \Omega^2(\nu(M))$, so for any $A$-module $M$, we have $\tau(M)\cong \Omega^5(M)$ and therefore $\tau_2(M)=\tau(\Omega(M))\cong M$.  

Finally, to show that $f$ is a fabric idempotent, we may observe that the $B$-module $ \mathrm{Hom}_A(T\oplus A,P)$ is projective-injective and hence $ _BB/\langle f\rangle=\Omega_B^{-1}(B)$. Now the properties $B/\langle f\rangle=\Omega^2_B(DB)$ and   $DB\in \mathrm{gen}_1(Be)$ are equivalent to the $2$-Auslander-Gorenstein condition $\mathrm{domdim}(B)\geq 3 \geq \mathrm{inj.dim}(_BB).$
\end{proof}

\begin{remark}
Recall that for a finite-dimensional algebra $A$, an $A$-module $T$ is a \emph{tilting module} if 
\begin{itemize}
	\item $\mathrm{proj.dim}(T)\leq 1$,
	\item $\mathrm{Ext}^1_A(T,T)=0$ and
	\item There is a short exact sequence $$0\rightarrow _AA\rightarrow T_0\rightarrow T_1\rightarrow 0$$ with $T_0,\ T_1\in  \mathrm{add}(T)$.
\end{itemize}
A tilting module is \emph{$P$-special} \cite{ps} if $T_0\in \mathrm{add}(P)$ for some projective $A$-module $P$. 
If $A$ is a finite-dimensional algebra with fabric idempotent $f$, then it follows from the definition of a fabric idempotent as well as Proposition \ref{masod} that there is an idempotent $e$ of $A$ and an exact sequence $$0\rightarrow A(1-e)\rightarrow P \rightarrow A/\langle f\rangle\rightarrow 0,$$ where $P\in \mathrm{add}(Ae)$.  It follows that $\mathrm{Ext}_A^1(A/\langle f\rangle,A/\langle f\rangle)=0$ from the fact that $\mathrm{mod}(A/\langle f\rangle)$ is closed under extensions in $\mathrm{mod}(A)$. Furthermore, condition (\ref{ef1}) of Proposition \ref{masod} ensures that $\mathrm{Ext}^1_A(A/\langle f\rangle, Ae)=0$. Therefore the $A$-module $Ae\oplus A/\langle f\rangle$ is an $Ae$-special tilting module. 

For an algebra $A$ such that $\mathrm{dom.dim}(A)\geq 1$ and with projective-injective generator $Ae$, the module $Ae\oplus \Omega^{-1}(A)$ is a \emph{canonical tilting module} in the sense of \cite[Section 4]{cx}. Canonical tilting modules also appear in the articles \cite{cbs} and \cite{nrtz}. 
In the case of $n$-Auslander-Gorenstein algebras, canonical tilting modules have been studied in \cite{mar2} and \cite{ps}. 
\end{remark}


\begin{eg}
Let $A=kQ/I$ be the algebra given by the quiver $Q$:
$$\begin{tikzpicture}
\node(d) at (0,1){$4$};
\node(a) at (0,-1){$1$};
\node(c) at (1,0){$3$};
\node(b) at (-1,0){$2$};

\path[->] (d) edgenode[above]{$\alpha$} (b);
\path[->] (d) edgenode[above]{$\beta$} (c);
\path[->] (b) edgenode[below]{$\gamma$} (a);
\path[->] (c) edgenode[below]{$\delta$} (a);
\path[->] (a) edgenode[left]{$\epsilon$} (d);
\end{tikzpicture}$$
with relations defined by $$I=<\gamma\alpha-\delta\beta,\epsilon\gamma\alpha,\alpha\epsilon\gamma,\beta\epsilon\delta>.$$
Then $P_1=I_1$, $P_2=I_3$, $P_3=I_2$ are projective-injective $A$-modules and the exact sequence:

$$0\rightarrow P_4\rightarrow P_1\rightarrow P_1\rightarrow P_2\oplus P_3\rightarrow I_4\rightarrow 0$$ shows that $A$ is $2$-Auslander-Gorenstein, and the exact sequence $$0\rightarrow P_4\rightarrow P_1\rightarrow S_1\rightarrow 0$$ shows that $f=e_2+ e_3+ e_4$ is a fabric idempotent. In fact, setting $e=e_1+ e_2+ e_3$, then the algebra $eAe$ is a preprojective algebra with quiver
$$\begin{tikzpicture}[scale=2]
\node(d) at (0,0){$4$};
\node(c) at (1,0){$3$};
\node(b) at (-1,0){$2$};

\node(da) at (0.1,0.1){};
\node(db) at (-0.1,0.1){};
\node(dc) at (0.1,-0.1){};
\node(dd) at (-0.1,-0.1){};

\node(ca) at (0.9,0.1){};
\node(cb) at (0.9,-0.1){};

\node(ba) at (-0.9,0.1){};
\node(bb) at (-0.9,-0.1){};

\path[->] (da) edgenode[above]{$\alpha$} (ca);
\path[->] (db) edgenode[above]{$\beta$} (ba);
\path[->] (bb) edgenode[below]{$\gamma^\prime$} (dd);
\path[->] (cb) edgenode[below]{$\delta^\prime$} (dc);
\end{tikzpicture}$$
and hence $A$ is a $2$-Auslander-Gorenstein algebra as described in Lemma \ref{ausgor}.\end{eg}

\subsection{Higher Nakayama algebras}

One of the motivating examples for studying localising subcategories are Nakayama algebras \cite{cy}. A \emph{Nakayama algebra} is defined to be an algebra where each projective module is uniserial. It is known that the quiver of any finite-dimensional Nakayama algebra is either of type $A$ or type $\tilde{A}$, and is completely determined by a \emph{Kupisch series}, see the references in \cite{jk}.  
\begin{defn}
Let $\underline{l}=(l_0,l_1,\ldots,l_{k-1})$ be an $k$-tuple of positive integers. Then $\underline{l}$ is a finite Kupisch series if 
\begin{itemize}
 \item If $1\leq i\leq k-1$, then $l_i\geq 2$.
\item If $0\leq i< k-1$, then $l_i-l_{i+1}\leq 1$.
\item If $l_0\ne 1$, then $l_{k-1}-l_0\leq 1$. 
\end{itemize}
\end{defn}

There is a higher version of the Nakayama algebras, as introduced in \cite{jk}. 
Let $l\geq 2$ be an integer and $\mathbf{S}^{(n)}_l$ the set of $n$-subsets of $\mathbb{Z}$ such that for each subset $I=\{i_1,\ldots,i_n\}\in\mathbf{S}^{(n)}_l$, we have $i_1<i_2<\vdots <i_n<i_1 +n+l-1$. Let $Q^{(n)}_{\mathbb{Z}l}$ be the quiver with vertices indexed by the elements of $\mathbf{S}^{(n)}_l$ and arrows $\alpha_{i}(I):I\rightarrow J$ wherever $I\setminus\{i\}=J\setminus\{i+1\}$ for some $i\in I$. For a $n$-subset  $I=\{i_1,i_2,\ldots,i_n\}$ set $$\tau_d(I)=\{i_1+1,\ldots,i_n+1\}. $$ Let $I^{(n)}_l$ be the admissible ideal of $Q^{(n)}_{\mathbb{Z}l}$ generated by the elements $$\alpha_j(\alpha_i(I))-\alpha_i(\alpha_j(I)),$$ which range over the elements of $\mathbf{S}^{(n)}_l$. By convention, $\alpha_i(I)=0$ whenever $I$ or $I\cup\{i+1\}\setminus \{i\}$ is not a member of $\mathbf{S}^{(n)}_l$, hence there are zero relations included in the ideal $I^{(n)}_l$. Now define
\begin{align*}A^{(n)}_{\mathbb{Z}l}&:=\mathbf{k}Q^{(n)}_{\mathbb{Z}l}/I^{(n)}_l.\\
A^{(n)}_{l}&:=A_{\mathbb{Z}l}/\tau^k_n.\end{align*}

\begin{theorem}\cite[Theorem 4.10]{jk}\label{id}
Let $l,n$ be positive integers and $l\geq 2$. Then the algebra $A^{(n)}_{l}$ is self-injective. 
\end{theorem}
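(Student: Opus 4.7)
The plan is to exhibit a Nakayama permutation $\nu$ of the vertex set of $A^{(n)}_l$ together with isomorphisms $P_I\cong E_{\nu(I)}$, where I write $E_J$ for the indecomposable injective at the vertex $J$; such data directly witnesses self-injectivity. For $n=1$ the claim specialises to the classical fact that a cyclic Nakayama algebra of constant Kupisch series $l$ is self-injective, with $\nu(\{i\})=\{i+l-1\}$, so the task is to extend this calculation to higher $n$ using the explicit combinatorics of $\mathbf{S}^{(n)}_l$.

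First I would analyse $P_I$ combinatorially inside the covering algebra $A^{(n)}_{\mathbb{Z}l}$. The relations $\alpha_j(\alpha_i(J))-\alpha_i(\alpha_j(J))$, together with the convention that $\alpha_i(J)=0$ whenever $J$ or its target falls outside $\mathbf{S}^{(n)}_l$, generate a commutativity ideal with boundary zeros. A standard diamond-lemma argument then shows that a basis of $P_I$ is indexed by the set $\mathcal{R}(I)$ of vertices $J$ reachable from $I$ by at least one path surviving modulo these relations; any two such surviving paths with the same terminal vertex are identified, and any path factoring through a zero step vanishes. In particular each composition factor of $P_I$ appears with multiplicity one.

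Second, I would show that $\mathcal{R}(I)$ has a unique maximal element $\nu(I)$ under the partial order induced by the arrows, obtained from $I$ by advancing coordinates one at a time as far as the defining inequalities of $\mathbf{S}^{(n)}_l$ allow. Consequently $P_I$ has simple socle $S_{\nu(I)}$ and embeds into $E_{\nu(I)}$. Running the dual combinatorial description of injectives over $A^{\mathrm{op}}$, which has an isomorphic quiver with the same relations thanks to the self-duality of commutativity, one finds that $E_{\nu(I)}$ has the same composition multiset as $P_I$, so the embedding must be an isomorphism.

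Finally, passing from $A^{(n)}_{\mathbb{Z}l}$ to the cyclic quotient $A^{(n)}_l=A^{(n)}_{\mathbb{Z}l}/\tau_n^k$ descends the construction: the map $\nu$ commutes with $\tau_n$ by its combinatorial description, hence induces a bijection on the vertex classes of $A^{(n)}_l$, and the quotient identification is compatible with the reachable sets. The main obstacle is the combinatorial verification that $\mathcal{R}(I)$ admits a unique maximal element and that the projective--injective comparison matches under the duality; this is delicate because the boundary zeros interact nontrivially with the commutativity relations (forcing long paths to vanish in ways that depend on the initial vertex), and it requires careful inductive bookkeeping of which paths survive after descent to the cyclic quotient.
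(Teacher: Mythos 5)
The paper does not prove this result; it is quoted from Jasso--K\"ulshammer \cite{jk}, so there is no internal argument for me to compare your proposal against. Judged on its own terms, the Nakayama-permutation strategy is a sensible route to self-injectivity, and the ingredients you invoke (multiplicity-free composition series, simple socle, projective--injective duality, descent along the covering) are all genuinely available for higher Nakayama algebras. The difficulty is that your outline relocates rather than resolves the hard points.

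The ``standard diamond-lemma argument'' carries the real content and does not go through as stated. Because the generators $\alpha_j(\alpha_i(I))-\alpha_i(\alpha_j(I))$ degenerate to zero relations whenever one side exits $\mathbf{S}^{(n)}_l$, commutativity and the boundary zeros interact: two formally ``commuting'' routes between vertices can have one side vanish, which then forces the other side to vanish as well. In concrete low cases this kills paths that would survive in a plain commutative grid, so the set of vertices reachable from $I$ by a nonzero path is \emph{not} the interval in the poset that naive reachability suggests, and in particular the maximal reachable vertex is \emph{not} obtained simply by ``advancing coordinates one at a time as far as the inequalities of $\mathbf{S}^{(n)}_l$ allow.'' Identifying $\mathrm{soc}(P_I)$ therefore requires first establishing the precise combinatorial model of the indecomposables (which in the source is obtained by an inductive construction through cluster-tilting modules, not a confluence argument), and only then reading off a Nakayama permutation. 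Likewise, ``$E_{\nu(I)}$ has the same composition multiset as $P_I$'' is the conclusion of this analysis, not a corollary of self-duality of commutativity; one needs the explicit antiautomorphism of $Q^{(n)}_{\mathbb{Z}l}$ compatible with the zero relations, and then a matching of reachable and co-reachable sets. Finally, the descent from $A^{(n)}_{\mathbb{Z}l}$ to $A^{(n)}_l$ requires a genuine Galois-covering argument (the push-down functor is dense and takes projectives to projectives and injectives to injectives); ``the quotient identification is compatible with the reachable sets'' is an assertion, not a proof. In short, the plan is coherent, but the steps you flag as ``delicate'' are exactly where the theorem lives, and the naive description of $\nu(I)$ would need to be corrected before the argument could be completed.
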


Consider a Kupisch series $\underline{l}=(l_0,l_1,\ldots,l_{k-1})$ with $l_0\ne 1$. Set $$l:=\mathrm{max}\{l_i\ |0\leq i\leq k-1\}$$ and $l_{j+ik}:=l_j$ for $i\in \mathbb{Z}$. Let 
$$ H^{(n)}_{\mathbb{Z}\underline{l}}:= \{\{i_1,\ldots,i_n\}\in\mathbf{S}^{(n)}_l|i_j\geq j+l_{j}+k-1\}.$$
Let $e^{(n)}_{\underline{l}}$ be the idempotent of $A^{(n)}_{l}$ corresponding to the vertex set $H^{(n)}_{\mathbb{Z}\underline{l}}$. Then define the algebras
\begin{align*} A^{(n)}_{\mathbb{Z}\underline{l}}&:=A^{(n)}_{l}/\langle e^{(n)}_{\underline{l}}\rangle .\\ 
A^{(n)}_{\underline{l}}&:=A_{\mathbb{Z}\underline{l}}/\tau^k_n\end{align*}
The algebra $A^{(n)}_{\underline{l}}$ is a \emph{higher Nakayama algebra of type $\tilde{A}$}. Note that for a \emph{constant Kupisch series} $\underline{l}=(l,l,\ldots,l)$ and $l\geq 2$, then $A^{(n)}_{\underline{l}}=A^{(n)}_{l}$. 

For a  higher Nakyama algebra with Kupisch series $\underline{l}=(l_0,l_1,\ldots,l_{k-1})$ such that $l_0\ne 1$ and $l_{k-1}\leq l_0$, $l_1<l_0$, define the new Kupisch series $\underline{l}^\prime$ via the following procedure. Let $0\leq i<k$ be given, and let $I=\{i=i_1,i_2,\ldots,i_n\}$ be the minimal subset (in that the difference $i_j-i+j$ is coordinate-wise minimal) such that for all $2\leq j\leq k$ we have $i_j\not\equiv 0$ ($\mathrm{mod}\ k$). If no such subset exists, or $i\equiv 0$ ($\mathrm{mod}\ k)$, then define $l_i^\prime =0$. Else, let $$l_i^\prime:=\#\{i_n\leq m< i+l_i+n-1|m\not\equiv 0(\mathrm{mod}\ k) \}.$$
Then let $\underline{l}^\prime$ be the list of each $l_i^\prime$ such that $l_i^\prime \geq 1$ (under the same ordering as for $\underline{l}$). This is a Kupisch series if each $l_i^\prime\geq 2$: by definition $$l_{k-1}-l_{1}= (l_{k-1}-l_0)+(l_0-l_1)\leq 2,$$ but we have assumed that $l_{k-1}-l_0\leq 0$ and the definition of $\underline{l}^\prime$ ensures that also $l^\prime_{k-1}-l^\prime_1 \leq 1$.

\begin{eg}\label{cup}
Let $\underline{l}=(4,3,3,3)$ and $n=2$. Then for each $1\leq i\leq 4$, the minimal $2$-subsets $\{i,i_2\}$ such that $i_2\not\equiv 0$ ($\mathrm{mod}\ 4$) are $\{1,2\}$, $\{2,3\}$, $\{3,5\}$. We calculate $$l_1=\#\{2\leq m\leq 4|m\not\equiv 0(\mathrm{mod}\ k) \}=2.$$ In this fashion we obtain the Kupisch series $\underline{l}^\prime=(2,2,2)$. 
\end{eg}
 The following theorem is a generalisation of \cite[Corollary 3.1]{cy}.
\begin{theorem}\label{nakay}
Let $A$ be a (finite-dimensional) higher Nakayama algebra. Then there exists a (finite-dimensional) self-injective higher Nakayama algebra $B$, such that 
$$D_{\mathrm{sg}}(A)\cong D_{\mathrm{sg}}(B)\cong \underline{\mathrm{mod}}(B).$$ In addition, we may list idempotents $1_A=f_0,f_1,f_2,\ldots,f_m$ such that $f_i$ is a fabric idempotent of $f_{i-1}\ldots f_0 A f_0\ldots f_{i-1}$ and $$B=f_{m}\ldots f_2f_1 A f_1f_2\ldots f_{m}.$$
\end{theorem}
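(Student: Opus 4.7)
The plan is to prove the theorem by induction on the size of the Kupisch series, with the inductive step furnished by the reduction procedure $\underline{l}\mapsto\underline{l}'$ described prior to Example \ref{cup}. For the base case, if $\underline{l}=(l,l,\ldots,l)$ is constant then $A=A^{(n)}_{l}$ is already self-injective by Theorem \ref{id}; we take $m=0$ and $B=A$, and the equivalence $D_{\mathrm{sg}}(B)\cong \underline{\mathrm{mod}}(B)$ is immediate from Buchweitz's theorem applied to a self-injective (hence $0$-Iwanaga-Gorenstein) algebra, for which every module is Gorenstein projective.

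For the inductive step, suppose the Kupisch series $\underline{l}=(l_0,\ldots,l_{k-1})$ is non-constant. After a cyclic rotation of the indices one may arrange $l_{k-1}\leq l_0$ and $l_1<l_0$, so that the reduction procedure yields a strictly smaller Kupisch series $\underline{l}'$. Let $f_1$ be the idempotent of $A=A^{(n)}_{\underline{l}}$ supported on those vertices of $Q^{(n)}_{\mathbb{Z}\underline{l}}$ whose indexing $n$-subset $\{i_1,\ldots,i_n\}$ (taken modulo $\tau_n^{k}$) avoids the residue class $0\pmod k$. The first main task is to verify, via the quiver-theoretic conditions of Proposition \ref{masod}, that $f_1$ is a fabric idempotent, with paired idempotent $e_1$ corresponding to the $\tau_n$-shifts of the removed vertices. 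The commutation relations $\alpha_i(\alpha_j(I))=\alpha_j(\alpha_i(I))$ built into $I^{(n)}_{l}$ supply exactly the commuting squares required by conditions (\ref{ef2}) and (\ref{ef3}), while condition (\ref{ef1}) follows from the fact that at each removed vertex there is a unique arrow (the one incrementing the smallest coordinate lying in $k\mathbb{Z}$) pointing back to a kept vertex.

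The next step is a path-tracking identification of $f_1Af_1$ with $A^{(n)}_{\underline{l}'}$: arrows in the corner algebra correspond to length-two paths through removed vertices in $A$, and the commutation relations transfer verbatim. Since $A/\langle f_1\rangle$ is supported on vertices whose $n$-subsets meet the residue class $0\pmod k$, this quotient is a higher Nakayama algebra of type $A$ and is therefore acyclic, with finite global dimension. Consequently Corollary \ref{chenii} applies, giving $D_{\mathrm{sg}}(A)\cong D_{\mathrm{sg}}(f_1Af_1)$. Iterating strictly decreases the size of the Kupisch series at each step, so after finitely many iterations we reach a higher Nakayama algebra $B=f_m\cdots f_1Af_1\cdots f_m$ with constant Kupisch series, which is self-injective by Theorem \ref{id}, and a final application of Buchweitz's theorem completes the chain $D_{\mathrm{sg}}(A)\cong D_{\mathrm{sg}}(B)\cong\underline{\mathrm{mod}}(B)$.

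The main obstacle is the detailed combinatorial verification that $f_1Af_1\cong A^{(n)}_{\underline{l}'}$ under the reduction procedure: one must carefully match arrows, paths, and commutation relations between the two $n$-subset quivers while tracking how the moduli $k$ and $k'$ relate under the deletion of the residue class $0\pmod k$. This is essentially a bookkeeping exercise, but it becomes genuinely intricate for $n\geq 2$, where several distinct minimal subsets $\{i,i_2,\ldots,i_n\}$ may contribute to the entry $l_i'$ of the reduced series and must be simultaneously tracked through the contraction.
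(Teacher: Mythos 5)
Your proposal collapses the paper's per-coordinate sequence of contractions into a single step: you take $f_1$ to be the idempotent supported on all vertices whose $n$-subset avoids the residue class $0\pmod k$ \emph{in every coordinate}, and then claim this composite is a fabric idempotent of $A$. The paper instead proceeds coordinate by coordinate: the paper's $f_1 = \sum_{i_1\neq 0} e_{i_1,\ldots,i_n}$ removes only the vertices with $i_1\equiv 0$, then $f_2$ is a fabric idempotent of the corner algebra $f_1Af_1$ removing vertices with $i_2\equiv 0$, and so on; the idempotent you call $f_1$ is the product $f_n\cdots f_1$ of the paper's sequence. This is not merely a bookkeeping convenience.

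The claim that your composite idempotent satisfies the fabric conditions of Proposition~\ref{masod} is false in general, and the failure is exactly at vertices with two or more coordinates in $k\mathbb{Z}$. Take $n=2$, $k=4$, $\underline{l}=(5,4,4,4)$, and the vertex $x=(0,4)$. Both coordinates are $\equiv 0\pmod 4$, so the only arrows out of $x$, namely $(0,4)\to(1,4)$ and $(0,4)\to(0,5)$, both land in the removed set: there is no arrow $\alpha_x$ from $x$ into the kept set, contradicting your parenthetical remark that the exit arrow is ``the one incrementing the smallest coordinate lying in $k\mathbb{Z}$''. Meanwhile $j=(1,4)$ has the arrow $\alpha_j:(1,4)\to(1,5)$ into the kept set, and the composition $\alpha_j\beta$ along $(0,4)\to(1,4)\to(1,5)$ is nonzero (indeed by the commutation relation it agrees with $(0,4)\to(0,5)\to(1,5)$, and $(0,5)$, $(1,5)$ are valid vertices here since $l=5$). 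So condition~(\ref{ef2}) of Proposition~\ref{masod} is violated: $\alpha_j\beta\notin I$ yet $\alpha_x$ does not exist. Consequently $\mathrm{proj.dim}_A(A/\langle f_1\rangle)\leq 1$ fails and Corollary~\ref{chenii} cannot be invoked for your $f_1$. The paper's per-coordinate factorisation is designed precisely so that at each step the removed vertices have a single ``bad'' coordinate, which guarantees a unique exit arrow and keeps the quotient's projective dimension at most one (with quotient an $(n-1)$-Auslander algebra of type $A$); your one-shot contraction breaks this. To repair the argument you would need to restore the paper's chain of fabric idempotents, one per coordinate, before iterating the Kupisch reduction.
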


\begin{proof}
If $\underline{l}$ is a Kupisch series where $l_0=1$, then $A:=A^{(n)}_{\underline{l}}$ has finite global dimension, and $D_{\mathrm{sg}}(A)$ is trivial. So let $\underline{l}$ be a Kupisch series of length $k$ and $l_0\ne 1$. If $\underline{l}$ is a constant Kupisch series, then $A^{(n)}_{\underline{l}}$ is a self-injective algebra by Theorem \ref{id}. In this case there is nothing to prove. So assume without loss of generality that $l_{0}>l_1$ and $l_{0}\geq l_{k-1}$.

Let $A:=A^{(n)}_{\underline{l}}$ be a higher Nakayama algebra satisfying the assumptions above. We show that we may successively apply Corollary \ref{chenii} to obtain the higher Nakayama algebra $A^{(n)}_{\underline{l}^\prime}$ whenever $l^\prime_i\geq 2$ for all $1\leq i\leq k-1$, and an algebra of finite global dimension otherwise.
  
Let $x:=\{0,i_2,\ldots,i_n\}$ be a vertex in the quiver of $A^{(n)}_{\underline{l}}$ such that $i_2>1$. In addition, let $x^\prime:=\{1,i_2,\ldots,i_n\}$; then any non-zero path between $x$ and any vertex $\{j_1,j_2,\ldots, j_n\}$ where $j_1>0$, must factor through the vertex $x^\prime$. As $l_1<l_0$, there is an inclusion in $\mathrm{mod}(A^{(n)}_{\mathbb{Z}\underline{l}})$: $$P_{x^\prime}\hookrightarrow P_x.$$
Now, define the following idempotent of $A$, $$f_1:=\sum_{i_1\ne 0}e_{i_1,\ldots ,i_n},$$ 
and the idempotent of $A^{(n)}_{\mathbb{Z}\underline{l}}$: $$f_1^\prime:=\sum_{i_1\not\equiv 0(\mathrm{mod}\ k)}e_{i_1,\ldots, i_n}.$$
 This induces a short exact sequence 
$$0\rightarrow Ae_{x^\prime}\rightarrow Ae_x\rightarrow Ae_x/\langle f_1 \rangle\rightarrow 0.$$
In addition, observe that if $i=(0,1,i_3,\ldots,i_n)$, then $Ae_i/\langle f_1\rangle$ is also projective as an $A$-module. So $\mathrm{proj.dim}_A(A/\langle f_1 \rangle )\leq 1$. 
By construction, the algebra $A/\langle f_1 \rangle$ is an $(n-1)$-Auslander algebra, and $\mathrm{gl.dim}(A/\langle f_1\rangle)=n-1$. Therefore the conditions of Corollary \ref{chenii} are satisfied, and there is an equivalence $D_{\mathrm{sg}}(A)\cong D_\mathrm{sg}(f_1Af_1)$.

Let $B:=f_1Af_1$ and $B^\prime :=f_1^\prime A^{(n)}_{\mathbb{Z}\underline{l}}f_1^\prime$. For any vertex $x:=(i_1,i_2,\ldots,i_n)$ in the quiver of $B^\prime$ such that $i_2\equiv 0\ (\mathrm{mod}\ k)$ and $i_3>i_2+1$, let $x^\prime:=(i_1,i_2+1,\ldots,i_n)$. Then any non-zero path between $x$ and any vertex $(j_1,j_2,\ldots, j_n)$ where $j_2>j_1$, must factor through the vertex $x^\prime$.  Let $\underline{j}^\prime:=(j^\prime_1,j^\prime_2,\ldots, j^\prime_n)$ be a vertex in the quiver of $A^{(n)}_{\mathbb{Z}\underline{l}}$ such that $j^\prime_2>i_2$. Suppose there is no non-zero path from $x$ to $\underline{j}^\prime$ but a non-zero path from $x^\prime$ to $\underline{j}^\prime$. Then we must have that $j^\prime_1\equiv 0\ (\mathrm{mod}\ k)$. The consequence is that there is an inclusion in $\mathrm{mod}(B^\prime)$: $$P_{x^\prime}\hookrightarrow P_x.$$ 
So define the following idempotent of $B$, $$f_2:=\sum_{i_2\not\equiv 0(\mathrm{mod}\ k)}e_{i_1,\ldots ,i_n}.$$ Then arguing as before (the only change is that $B/\langle f_2\rangle$ may instead be a factor algebra of an $(n-1)$-Auslander algebra of type $A$) we obtain an equivalence $$D_{\mathrm{sg}}(B)\cong D_\mathrm{sg}(f_2Bf_2).$$ Continuing in this fashion, we obtain an equivalence $$D_{\mathrm{sg}}(A)\cong D_\mathrm{sg}(fAf)$$ where $$f=\sum_{\substack{ i_j\not\equiv 0(\mathrm{mod}\ k) \\ \forall 1\leq j\leq n}}e_{i_1,\ldots ,i_n}.$$

It now follows directly from the construction that if $l_i^\prime\geq 2$ for all $1\leq i\leq k-1$, then $fAf$ is the higher Nakayama algebra $A^{(n)}_{\underline{l}^\prime}$. If $l_i<2$ for some $1\leq i\leq k-1$, then the algebra $fAf$ has no cycles, and hence $D_\mathrm{sg}(fAf)$ is trivial. If $A^{(n)}{\underline{l}^\prime}$ is not self-injective, then we may apply the construction again until we reach a self-injective higher Nakayama algebra.  Finally, it follows from the definition of a self-injective algebra that if $B$ is a self-injective algebra, then $D_{\mathrm{sg}}(B)\cong \underline{\mathrm{mod}}(B)$.
\end{proof}

\begin{eg}

Let $\underline{l}= (4,3,3,3)$, then the higher Nakayama algebra $A:=A^2_{\underline{l}}$ has quiver

$$\begin{tikzpicture}
\node(a) at (0,0){$01$};
\node(b) at (1,1){$02$};
\node(c) at (2,2){$03$};
\node(d) at (3,3){$04$};
\node(e) at (2,0){$12$};
\node(f) at (3,1){$13$};
\node(g) at (4,2){$14$};
\node(h) at (4,0){$23$};
\node(i) at (5,1){$24$};
\node(j) at (6,2){$25$};
\node(k) at (6,0){$34$};
\node(l) at (7,1){$35$};
\node(m) at (8,2){$36$};
\node(n) at (8,0){$01$};
\node(o) at (9,1){$02$};
\node(p) at (10,2){$03$};
\node(q) at (11,3){$04$};
\node(r) at (10,0){$12$};
\node(s) at (11,1){$13$};
\node(t) at (12,2){$14$};

\path[->] (a) edge (b);
\path[->] (b) edge (c);
\path[->] (c) edge (d);
\path[->] (e) edge (f);
\path[->] (f) edge (g);
\path[->] (h) edge (i);
\path[->] (i) edge (j);
\path[->] (k) edge (l);
\path[->] (l) edge (m);
\path[->] (n) edge (o);
\path[->] (o) edge (p);
\path[->] (p) edge (q);
\path[->] (r) edge (s);
\path[->] (s) edge (t);

\path[->] (b) edge (e);
\path[->] (c) edge (f);
\path[->] (d) edge (g);
\path[->] (f) edge (h);
\path[->] (g) edge (i);
\path[->] (i) edge (k);
\path[->] (j) edge (l);
\path[->] (l) edge (n);
\path[->] (m) edge (o);
\path[->] (o) edge (r);
\path[->] (p) edge (s);
\path[->] (q) edge (t);

\path[-,dotted] (a) edge (e);
\path[-,dotted] (e) edge (h);
\path[-,dotted] (h) edge (k);
\path[-,dotted] (n) edge (k);
\path[-,dotted] (n) edge (r);

\path[-,dotted] (b) edge (f);
\path[-,dotted] (f) edge (i);
\path[-,dotted] (i) edge (l);
\path[-,dotted] (o) edge (l);
\path[-,dotted] (o) edge (s);

\path[-,dotted] (c) edge (g);
\path[-,dotted] (g) edge (j);
\path[-,dotted] (j) edge (m);
\path[-,dotted] (p) edge (m);
\path[-,dotted] (p) edge (t);
\end{tikzpicture}$$
and relations indictated by the dotted arrows. It may be calculated that $A$ is 6-Iwanaga-Gorenstein, and setting 
\begin{align*}
f&:=e_{12}+ e_{13}+ e_{14} + e_{23}+ e_{24}+ e_{25}+ e_{34}+ e_{35}+ e_{36}\\
e&:=e_{01}+ e_{02}+ e_{03}+ e_{04} + e_{23}+ e_{24}+ e_{25}+ e_{34}+ e_{35}+ e_{36}
\end{align*}
then it may be seen by Proposition \ref{masod} that $f$ is a fabric idempotent and that $\Omega^2(I_{03})=P_{14}$,  $\Omega^6(I_{13})= P_{12}$ and $\Omega^6(I_{23})=P_{13}$. This implies that for any module $M\in\mathrm{GI}(A)$, the first two terms in the projective resolution of $M$ are generated by $Ae$, the first six generated by $A(e+ e_{14})$ and the rest generated by $Af$.
In addition, $D_{\mathrm{sg}}(A)\cong D_{\mathrm{sg}}(fAf)$,
where $fAf$ is the algebra given by the quiver with mesh relations indicated
$$\begin{tikzpicture}
\node(e) at (2,0){$12$};
\node(f) at (3,1){$13$};
\node(g) at (4,2){$14$};
\node(h) at (4,0){$23$};
\node(i) at (5,1){$24$};
\node(j) at (6,2){$25$};
\node(k) at (6,0){$34$};
\node(l) at (7,1){$35$};
\node(m) at (8,2){$36$};
\node(r) at (9,1){$12$};
\node(s) at (10,2){$13$};
\node(t) at (11,3){$14$};

\path[->] (e) edge (f);
\path[->] (f) edge (g);
\path[->] (h) edge (i);
\path[->] (i) edge (j);
\path[->] (k) edge (l);
\path[->] (l) edge (m);
\path[->] (r) edge (s);
\path[->] (s) edge (t);

\path[->] (f) edge (h);
\path[->] (g) edge (i);
\path[->] (i) edge (k);
\path[->] (j) edge (l);

\path[-,dotted] (e) edge (h);
\path[-,dotted] (h) edge (k);

\path[-,dotted] (f) edge (i);
\path[-,dotted] (i) edge (l);
\path[-,dotted] (r) edge (l);

\path[-,dotted] (g) edge (j);
\path[-,dotted] (j) edge (m);
\path[-,dotted] (s) edge (m);

\path[->] (m) edge (r);
\end{tikzpicture}$$
Setting 
\begin{align*}
f^\prime&:=e_{12}+ e_{13} + e_{23}+ e_{25}+ e_{35}+ e_{36}\\
e^\prime&:=e_{12}+ e_{13}+ e_{14} + e_{23}+ e_{25}+ e_{36},
\end{align*}
then as in the proof of Theorem \ref{nakay} $f^\prime$ is a fabric idempotent of $fAf$. Finally the algebra $f^\prime fAf f^\prime$ is self-injective:

$$\begin{tikzpicture}
\node(e) at (2,0){$12$};
\node(f) at (3,1){$13$};
\node(h) at (4,0){$23$};
\node(j) at (5,1){$25$};
\node(l) at (6,0){$35$};
\node(m) at (7,1){$36$};
\node(r) at (8,0){$12$};
\node(s) at (9,1){$13$};

\path[->] (e) edge (f);
\path[->] (f) edge (h);
\path[->] (h) edge (j);
\path[->] (j) edge (l);
\path[->] (l) edge (m);
\path[->] (m) edge (r);
\path[->] (r) edge (s);

\path[-,dotted] (e) edge (h);
\path[-,dotted] (h) edge (l);
\path[-,dotted] (r) edge (l);

\path[-,dotted] (f) edge (j);
\path[-,dotted] (j) edge (m);
\path[-,dotted] (m) edge (s);
\end{tikzpicture}$$

and so we may calculate $D_{\mathrm{sg}}(A)\cong D_{\mathrm{sg}}(f^\prime fAff^\prime)\cong \underline{\mathrm{mod}}(f^\prime fAff^\prime)$

\end{eg}

\section{Acknowledgements}
This paper was completed as part of my PhD studies, with the support of the Austrian Science Fund (FWF): W1230. I would like to thank my supervisor, Karin Baur, for her continued help and support during my studies.

\bibliographystyle{amsplain}
\bibliography{idem}

\end{document}